\date{\today}
\newcommand{\Z}{{\mathbb Z}}
\newcommand{\R}{{\mathbb R}}
\newcommand{\Q}{{\mathbb Q}}
\newtheorem{theorem}{Theorem}[section]
\newtheorem{remark}[theorem]{Remark}
\newtheorem{lemma}[theorem]{Lemma}
\newtheorem{prop}[theorem]{Proposition}
\newtheorem{defi}[theorem]{Definition}
\newtheorem{conj}[theorem]{Conjecture}
\newtheorem{question}[theorem]{Question}
\def\be{\begin{equation}}
\def\ee{\end{equation}}
\begin{document}

\title[The Dimension of the Spectrum of Sturmian Hamiltonians]{Almost Sure Frequency Independence of the Dimension of the Spectrum of Sturmian Hamiltonians}

\author[D.\ Damanik]{David Damanik}

\address{Department of Mathematics, Rice University, Houston, TX~77005, USA}

\email{damanik@rice.edu}

\thanks{D.\ D.\ was supported in part by NSF grant DMS--1067988.}

\author[A.\ Gorodetski]{Anton Gorodetski}

\address{Department of Mathematics, University of California, Irvine, CA~92697, USA}

\email{asgor@math.uci.edu}

\thanks{A.\ G.\ was supported in part by NSF grants DMS--1301515 and 
IIS-1018433.}

\begin{abstract}
We consider the spectrum of discrete Schr\"odinger operators with Sturmian potentials and show that for sufficiently large coupling, its Hausdorff dimension and its upper box counting dimension are the same for Lebesgue almost every value of the frequency.
\end{abstract}

\maketitle

\section{Introduction}\label{s.intro}

In this paper we study discrete Schr\"odinger operators
$$
[H_{\lambda,\alpha,\omega}\psi](n) = \psi(n+1) + \psi(n-1) + \lambda \chi_{[1 - \alpha,1)} (n \alpha + \omega \!\!\!\!\mod 1) \psi(n)
$$
in $\ell^2(\Z)$, where $\lambda > 0$ is the coupling constant, $\alpha \in (0,1) \setminus \Q$ is the frequency, and $\omega \in [0,1)$ is the phase. These operators are popular models of one-dimensional quasicrystals and have been studied since the 1980's; see, for example, \cite{B92, BIST89, BIT91, D07, DEG} and references therein.

By the minimality of irrational rotations of the circle and strong operator convergence it follows that the spectrum of $H_{\lambda,\alpha,\omega}$ is independent of $\omega$ and may therefore be denoted by $\Sigma_{\lambda,\alpha}$. The spectrum does, however, depend on $\lambda$ and $\alpha$. It is known from \cite{BIST89} that $\Sigma_{\lambda,\alpha}$ is a Cantor set of zero Lebesgue measure. One is therefore naturally interested in the fractal dimension of this set.

The following conjecture has been around for quite a long time:\footnote{Jean Bellissard informs us that he has expected this statement to hold since the 1980's.}

\begin{conj}\label{c.main}
For every $\lambda > 0$, the Hausdorff dimension of $\Sigma_{\lambda,\alpha}$ is Lebesgue almost everywhere constant in $\alpha$.
\end{conj}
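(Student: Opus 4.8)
The plan is to exhibit the fractal dimension of $\Sigma_{\lambda,\alpha}$ as a ratio of Birkhoff averages along the orbit of $\alpha$ under the Gauss map, and then to invoke the ergodicity of that map to get almost-sure constancy. Writing the continued fraction expansion $\alpha = [0; a_1, a_2, a_3, \ldots]$ with convergents $p_k/q_k$, the periodic-approximation theory tells us that $\Sigma_{\lambda,\alpha}$ is organized hierarchically: the spectrum of the $q_k$-periodic operator consists of $q_k$ bands, and as $k \to \infty$ these band structures nest down to the Cantor set $\Sigma_{\lambda,\alpha}$. The geometry of the nesting is governed by the trace map, whose renormalization step at level $k$ is dictated by the digit $a_k$. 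First I would set up this sequential (non-autonomous) trace-map dynamics and record, for $\lambda$ large, the uniform hyperbolicity of the relevant invariant set, so that band lengths and gaps contract at controlled exponential rates.

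Second, I would convert the covering data into a dimension formula. For a Cantor set built from such a hierarchy of bands one expects
\[ \Dh \Sigma_{\lambda,\alpha} \;=\; \liminf_{k \to \infty} \frac{\log N_k}{-\log \ell_k}, \qquad \udim \Sigma_{\lambda,\alpha} \;=\; \limsup_{k \to \infty} \frac{\log N_k}{-\log \ell_k}, \]
where $N_k$ is the number of bands at level $k$ and $\ell_k$ a typical band length. Using the transfer-matrix/trace-map picture, both $\log N_k$ and $-\log \ell_k$ should admit expressions as Birkhoff sums of functions $g_\lambda$ and $h_\lambda$ evaluated along $\alpha, G\alpha, G^2\alpha, \ldots$, where $G(x) = \{1/x\}$ is the Gauss map and the summand at step $j$ depends on the digit $a_j$ (through the number of trace-map iterations and the associated expansion rate). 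More precisely one should work with the natural extension of $G$, to account for the dependence of the contraction rates on the accumulated trace-map state; in either formulation the dimension is exhibited as a ratio of two ergodic averages over a single system.

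Third, I would invoke Birkhoff's ergodic theorem. The Gauss map preserves the Gauss measure $d\mu = \frac{1}{\log 2}\frac{dx}{1+x}$, which is equivalent to Lebesgue measure and with respect to which $G$ is ergodic. Provided $g_\lambda, h_\lambda \in L^1(\mu)$, the averages $\tfrac{1}{k}\log N_k$ and $\tfrac{1}{k}(-\log \ell_k)$ converge, for $\mu$-almost every and hence Lebesgue-almost every $\alpha$, to the constants $\int g_\lambda \, d\mu$ and $\int h_\lambda \, d\mu$. Their quotient then converges, which forces the $\liminf$ and $\limsup$ above to coincide and to equal one and the same constant, yielding simultaneously that $\Dh = \udim$ and that the common value is independent of $\alpha$ off a null set.

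The hard part will be the integrability and uniform-distortion estimates hidden in the second and third steps. Because the continued-fraction digits $a_j$ are unbounded, a long run (large $a_j$) corresponds to many iterations of the trace map, and one must verify (i) that the per-step contributions grow no faster than $\sim \log a_j$, so that $g_\lambda, h_\lambda \in L^1(\mu)$ despite the heavy tail $\mu(a_j = m) \asymp m^{-2}$, and (ii) that the bounded-distortion and hyperbolicity constants do not degenerate as $a_j \to \infty$, so that the identification of $\log N_k$ and $-\log \ell_k$ with honest Birkhoff sums survives, with error terms that are $o(k)$ almost surely. Controlling these long stretches uniformly in the digit, and isolating the correct integrable cocycle, is where the real work lies; the large-coupling hypothesis is precisely what makes the requisite hyperbolicity available.
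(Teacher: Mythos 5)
There is a genuine gap, and it sits exactly where you defer the work: your Step~2 is not a technical verification but the (unproven, and as stated incorrect) heart of the matter. The formula $\dim_\mathrm{H}\Sigma_{\lambda,\alpha}=\liminf_k \frac{\log N_k}{-\log \ell_k}$ with a single ``typical band length'' $\ell_k$ is not valid for this band hierarchy: at level $k$ the spectral generating bands have enormously disparate lengths, and the disparity grows without bound along a typical Gauss orbit because the digits $a_j$ are unbounded almost everywhere. This non-uniformity is precisely why Liu--Qu--Wen must work with the Moran-type pre-dimensions $s_k$ defined by $\sum_{B\in\mathcal{G}_k}|B|^{s_k}=1$ rather than any count-over-scale ratio, and it is also why $\dim_\mathrm{H}\Sigma_{\lambda,\alpha}$ and $\dim_\mathrm{B}^+\Sigma_{\lambda,\alpha}$ can genuinely differ \cite{LPW07} --- a phenomenon your formula, which forces both dimensions through the same sequence of ratios, cannot accommodate. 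Equally serious, the representation of $-\log\ell_k$ (or of $s_k$ itself) as a Birkhoff sum of an $L^1(\mu)$ cocycle over the Gauss map with $o(k)$ errors does not follow from uniform hyperbolicity or bounded distortion; nothing of this kind is known even for $\lambda\ge 24$. If your scheme could be completed, it would identify the almost-sure value of the dimension as $\int g_\lambda\,d\mu \big/ \int h_\lambda\,d\mu$ and would prove that $\dim_\mathrm{H}=\dim_\mathrm{B}^+$ for Lebesgue-a.e.\ $\alpha$; both conclusions are beyond current knowledge, and the paper explicitly lists the determination of the a.e.\ value as an open problem. Finally, you concede that hyperbolicity requires large coupling, so even a completed version of your argument would prove only the partial result for large $\lambda$, not Conjecture~\ref{c.main} for all $\lambda>0$ --- though that limitation is shared by the paper, which also only proves the case $\lambda \ge 24$ (Theorem~\ref{t.main}).

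The paper's route shows how to avoid everything you found hard. It never expresses the dimension as a ratio of ergodic averages. Instead it uses: (i) the Liu--Qu--Wen identification $\dim_\mathrm{H}\Sigma_{\lambda,\alpha}=s_*(\lambda,\alpha)$, $\dim_\mathrm{B}^+\Sigma_{\lambda,\alpha}=s^*(\lambda,\alpha)$ (Theorem~\ref{t.lqw}); (ii) their bounded covariation principle (Theorem~\ref{t.boundedcovariation}), which, via the elementary Proposition~\ref{p.simple}, yields the \emph{exact pointwise invariance} $s_*(\lambda,\alpha)=s_*(\lambda,G(\alpha))$ and $s^*(\lambda,\alpha)=s^*(\lambda,G(\alpha))$ for every irrational $\alpha$, by comparing the full band hierarchy of $G(\alpha)$ with the sub-hierarchy of $\alpha$ sitting inside a single first-level band; (iii) measurability of $\alpha\mapsto\dim_\mathrm{H}\Sigma_{\lambda,\alpha}$ and $\alpha\mapsto\dim_\mathrm{B}^+\Sigma_{\lambda,\alpha}$ (continuity of the spectrum plus Mattila--Mauldin); and (iv) the fact that a measurable $G$-invariant function is a.e.\ constant with respect to any $G$-ergodic measure. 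Ergodicity enters only through the triviality of invariant functions, not through Birkhoff averaging, so no integrability of any cocycle, no distortion-to-cocycle conversion, and no formula for the constant are ever needed; as a bonus this gives constancy with respect to every ergodic measure (Theorem~\ref{t.main2}), not just the Gauss/Lebesgue class.
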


The intuition behind this conjecture is that the Hausdorff dimension of $\Sigma_{\lambda,\alpha}$, as a function of $\alpha$ for $\lambda$ fixed, should be invariant under the Gauss map. Indeed, the spectrum displays scaling properties that are very well understood and which are determined by the continued fraction expansion of $\alpha$. These scaling properties suggest that the Hausdorff dimension should only depend on the tail of the continued fraction expansion of $\alpha$, and hence the invariance under the Gauss map. The statement about Lebesgue almost everywhere constancy then follows from the existence of an ergodic measure for the Gauss map which is mutually absolutely continuous relative to Lebesgue measure.

The same reasoning applies to the upper box counting dimension of the spectrum, and hence one can reasonably expect this quantity to be almost everywhere constant in the frequency as well. Note that it is known that these two dimensions of the spectrum do not coincide in general \cite{LPW07}.

Alas, not even any partial results are known. In this paper we will prove the almost everywhere constancy of $\dim_H \Sigma_{\lambda,\alpha}$ and $\dim_B^+ \Sigma_{\lambda,\alpha}$ in $\alpha$ for $\lambda$ sufficiently large.

\begin{theorem}\label{t.main}
For every $\lambda \ge 24$, both the Hausdorff dimension and the upper box counting dimension of $\Sigma_{\lambda,\alpha}$ are Lebesgue almost everywhere constant in $\alpha$.
\end{theorem}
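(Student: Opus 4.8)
The plan is to show that the function $\alpha \mapsto \dim_H \Sigma_{\lambda,\alpha}$, and likewise $\alpha \mapsto \dim_B^+ \Sigma_{\lambda,\alpha}$, is for almost every $\alpha$ invariant under the Gauss map $G : [0,1)\setminus\Q \to [0,1)\setminus\Q$, $G(\alpha) = \{1/\alpha\}$, which shifts the continued fraction expansion $\alpha = [a_1,a_2,a_3,\dots]$ to $G(\alpha) = [a_2,a_3,\dots]$. Once this $G$-invariance is in place, the conclusion is immediate: the Gauss measure $d\mu = \frac{1}{\log 2}\frac{dx}{1+x}$ is $G$-invariant, ergodic, and mutually absolutely continuous with respect to Lebesgue measure, so any measurable $G$-invariant function is constant $\mu$-almost everywhere, hence Lebesgue-almost everywhere. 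Thus the entire difficulty is concentrated in establishing the $G$-invariance of the two dimensions.

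To access the scaling structure of $\Sigma_{\lambda,\alpha}$ I would use the trace map formalism. Over the periodic approximants $p_k/q_k$ of $\alpha$ the transfer matrices have traces $x_k$ obeying the recursion driven by the trace map $T(x,y,z) = (2xy - z, x, y)$, which preserves the Fricke--Vogt surface $S_\lambda = \{(x,y,z) : x^2+y^2+z^2 - 2xyz = 1 + \lambda^2/4\}$. The hypothesis $\lambda \ge 24$ is the point where uniform hyperbolicity holds: the relevant invariant set of $T$ on $S_\lambda$ is a locally maximal hyperbolic set whose cone conditions and distortion bounds are uniform in the continued-fraction data. Consequently $\Sigma_{\lambda,\alpha}$ is realized as a nested intersection of bands, organized into a hierarchical Cantor structure in which the passage from level $k$ to level $k+1$ is dictated by the digit $a_{k+1}$, and in which each band carries bounded-geometry estimates (uniform control of the ratios of lengths of adjacent bands and of a band to its sub-bands) that do not depend on $\alpha$. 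This uniformity across all frequencies simultaneously is precisely what the explicit threshold buys us.

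With this structure in hand, the $G$-invariance of the Hausdorff dimension follows from the observation that $\Sigma_{\lambda, G\alpha}$ is bi-Lipschitz equivalent, with constants controlled by the uniform distortion bounds, to the restriction of $\Sigma_{\lambda,\alpha}$ to a single band of its first generation, since the fine structure inside that band is governed by $a_2, a_3, \dots$, which are exactly the digits of $G\alpha$. Because the hierarchical Cantor set has bounded geometry, its Hausdorff dimension equals that of any of its generating pieces; hence $\dim_H \Sigma_{\lambda,\alpha} = \dim_H \Sigma_{\lambda, G\alpha}$. I expect this step --- turning the combinatorial shift of the continued fraction into a genuine bounded-distortion comparison of Cantor sets, uniformly in the unboundedly large digits $a_k$ --- to be the main obstacle, since it requires the hyperbolic estimates to survive the arbitrarily long return loops produced by large continued-fraction entries.

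For the upper box counting dimension one must be more careful, because $\dim_B^+$ is not automatically preserved under passage to a sub-piece or under the reorganizations used above. Here I would exploit the uniform bounded geometry to show that the optimal covers of $\Sigma_{\lambda,\alpha}$ and of each of its first-generation bands have comparable counting functions at every scale, so that the $\limsup$ defining $\dim_B^+$ is unchanged under the shift; the uniformity coming from $\lambda \ge 24$ is again essential. A routine measurability check for $\alpha \mapsto \dim_H \Sigma_{\lambda,\alpha}$ and $\alpha \mapsto \dim_B^+ \Sigma_{\lambda,\alpha}$ then lets me present both as measurable $G$-invariant functions, and ergodicity of the Gauss map yields their almost sure constancy in $\alpha$.
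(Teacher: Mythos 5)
Your overall skeleton is exactly the paper's: establish that $\alpha \mapsto \dim_H \Sigma_{\lambda,\alpha}$ and $\alpha \mapsto \dim_B^+ \Sigma_{\lambda,\alpha}$ are measurable and invariant under the Gauss map $G$, then invoke ergodicity of the Gauss measure and its mutual absolute continuity with Lebesgue measure. However, the core step --- $G$-invariance --- rests in your proposal on two assertions that are not proven and are not available in the literature. First, you attribute the threshold $\lambda \ge 24$ to uniform hyperbolicity of the trace map on the Fricke--Vogt surface. That is a misattribution: for the Fibonacci frequency hyperbolicity of the trace map is known for \emph{all} $\lambda > 0$, while for general Sturmian frequencies the recursion $M_{k+1} = M_{k-1} M_k^{a_{k+1}}$ is non-autonomous, and obtaining distortion estimates that are uniform over unbounded digits $a_k$ is precisely the hard analytic content of Liu--Qu--Wen's work (their ``bounded covariation'' theorem, proved by direct band/trace analysis, with $\lambda \ge 24$ as its hypothesis); it is not a consequence of hyperbolicity of a single map. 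You yourself flag this as ``the main obstacle,'' but then assume it rather than resolve it. Second, even granting those uniform estimates, your route needs $\Sigma_{\lambda,G\alpha}$ to be \emph{bi-Lipschitz equivalent} to $\Sigma_{\lambda,\alpha} \cap B$ for a first-generation band $B$. Bounded covariation controls ratios of \emph{band} lengths; a bi-Lipschitz map between the Cantor sets additionally requires two-sided uniform control of the \emph{gaps} at every scale, which is a genuine strengthening that you do not establish. Tellingly, the paper states a closely related smooth-equivalence statement ($\Sigma_{\lambda,\alpha}$ and $\Sigma_{\lambda,G\alpha}$ diffeomorphic) as an open conjecture.

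The paper's actual argument avoids constructing any map between spectra. It uses the theorem of Liu--Qu--Wen identifying, for $\lambda \ge 24$, $\dim_H \Sigma_{\lambda,\alpha}$ and $\dim_B^+ \Sigma_{\lambda,\alpha}$ with the lower and upper pre-dimensions $s_*(\lambda,\alpha)$, $s^*(\lambda,\alpha)$, defined as $\liminf$/$\limsup$ of the Moran exponents $s_k$ solving $\sum_{B \in \mathcal{G}_k} |B|^{s_k} = 1$. Invariance under $G$ is then a statement about these \emph{numbers}: the bands of $\Sigma_{\lambda,G\alpha}$ correspond combinatorially to the continuations $wu$ of a fixed word $w$ for frequency $\alpha$, bounded covariation makes the two collections of band lengths uniformly comparable, and an elementary lemma (Proposition~\ref{p.simple} in the paper) shows that uniformly comparable Moran systems have identical $\liminf$ and $\limsup$ exponents. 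This replaces your bi-Lipschitz claim entirely, and it also supplies the homogeneity needed to say that the dimension of the whole spectrum equals that of a generating piece (your ``bounded geometry'' assertion), which again is not automatic. Finally, the measurability you call ``routine'' is not: $\dim_H$ is not continuous on the space of compact sets, and the paper needs both the Bellissard--Iochum--Testard continuity of $\alpha \mapsto \Sigma_{\lambda,\alpha}$ in the Hausdorff metric and the Mattila--Mauldin result that $K \mapsto \dim_H K$ and $K \mapsto \dim_B^+ K$ are Baire class~$2$ (hence Borel) functions.
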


Our proof makes crucial use of recent work of Liu, Qu and Wen \cite{LQW14} who identified, for $\lambda \ge 24$, $\dim_H \Sigma_{\lambda,\alpha}$ and $\dim_B^+ \Sigma_{\lambda,\alpha}$ with the so-called lower and upper pre-dimensions, which are defined via Moran-type formulas. These identities enable us to indeed prove the invariance of $\dim_H \Sigma_{\lambda,\alpha}$ and $\dim_B^+ \Sigma_{\lambda,\alpha}$ under the Gauss map since the pre-dimensions studied by Liu, Qu and Wen are quite amenable to this question.

The overall strategy, namely to show the invariance of the dimension of the spectrum (or some other spectral quantity of interest) under the Gauss map, is applicable whenever the potentials are dynamically defined, with the underlying dynamics given by an irrational rotation of the circle. While this approach is natural and known to experts in the field, this is the first time it has actually been implemented. We hope that this will lead to subsequent successful implementations of this strategy for other quantities and/or sampling functions; compare Questions~\ref{q.amo} and \ref{q.general} in Section~\ref{s.remarks}.

The main idea behind the proof of Theorem~\ref{t.main}, along with a natural strengthening of this result that comes naturally out of this proof, Theorem~\ref{t.main2}, is described in Section~\ref{s.gaussmap}. In Section~\ref{s.lqw} we recall the necessary definitions and results from \cite{LQW14} that are needed in our discussion, including the principle of bounded covariance. Readers familiar with the work \cite{LQW14} may skip ahead to Section~\ref{s.invariance}. We show in Section~\ref{s.invariance} that bounded covariance implies the invariance of the dimensions in question under the Gauss map. Theorems~\ref{t.main} and \ref{t.main2} are then proved in Section~\ref{s.proof}. Finally, Section~\ref{s.remarks} contains some questions and open problems arising from and related to Theorems~\ref{t.main} and \ref{t.main2}.

\section{Invariance Under the Gauss Map and Almost Everywhere Constancy with Respect to any Ergodic Measure}\label{s.gaussmap}

In \cite{LQW14}, Liu, Qu and Wen work with the so-called lower and upper pre-dimensions $s_*(\lambda,\alpha)$ and $s^*(\lambda,\alpha)$ of $\Sigma_{\lambda,\alpha}$. The definition of these pre-dimensions is somewhat involved and we will recall it in \eqref{e.sdef} in Section~\ref{s.lqw} below. The main result from \cite{LQW14} we will rely on is the following.

\begin{theorem}[Liu-Qu-Wen 2014]\label{t.lqw}
For $\lambda \ge 24$ and $\alpha \in (0,1) \setminus \Q$, we have $\dim_H \Sigma_{\lambda,\alpha} = s_*(\lambda,\alpha)$ and $\dim_B^+(\Sigma_{\lambda,\alpha}) = s^*(\lambda,\alpha)$.
\end{theorem}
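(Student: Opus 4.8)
The plan is to realize $\Sigma_{\lambda,\alpha}$ as the limit set of a hierarchical, Moran-type construction built from the periodic approximations of the operator, and then to match each of the two fractal dimensions against the corresponding pre-dimension through sharp length estimates on the building blocks. First I would pass to the continued fraction expansion $\alpha = [0;a_1,a_2,\ldots]$ with denominators $q_n$ satisfying $q_{n+1} = a_{n+1} q_n + q_{n-1}$, and for each $n$ introduce the transfer-matrix trace $x_n(E) = \tfrac{1}{2}\tr M_n(E)$ over a period $q_n$. The trace recursion and the conserved Fricke--Vieta invariant describe the periodic spectra $\sigma_n = \{E : |x_n(E)| \le 1\}$ as finite unions of bands, and one recovers the spectrum as a nested limit of the unions $\sigma_n \cup \sigma_{n+1}$. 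Each band of generation $n$ contains a combinatorially determined family of bands of generation $n+1$, the number and arrangement being governed by $a_{n+1}$; this produces a labelled tree of nested bands whose infinite branches parametrize the points of $\Sigma_{\lambda,\alpha}$.

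The heart of the argument, and the place where $\lambda \ge 24$ enters, is a pair of quantitative estimates on these bands. I would establish two-sided bounds on the length $|B|$ of each band in terms of its generation and combinatorial type, and, crucially, the principle of bounded covariance: the ratio $|B'|/|B|$ of a child band $B'$ to its parent $B$ is bounded above and below by constants depending only on $\lambda$, uniformly across all generations and all admissible types. Large coupling forces the bands to be strongly separated and forces the trace map to act with uniform expansion on the relevant invariant set, which is exactly what yields these uniform distortion bounds; the threshold $24$ is the coupling above which the estimates close for every Sturmian combinatorics rather than only in the golden/Fibonacci regime.

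With the length estimates and bounded covariance in hand, the two identities follow from standard fractal-geometric machinery applied level by level. Define $s_n$ by the Moran-type equation $\sum_{B} |B|^{s_n} = 1$, the sum being over generation-$n$ bands, and set $s_* = \liminf_n s_n$ and $s^* = \limsup_n s_n$. Since the generation-$n$ bands form a natural cover and their diameters tend to zero, one gets $\dim_B^+ \Sigma_{\lambda,\alpha} \le s^*$ directly and, by passing to the subsequence realizing the liminf, $\dim_H \Sigma_{\lambda,\alpha} \le s_*$. For the matching lower bounds I would distribute a probability measure $\mu$ on $\Sigma_{\lambda,\alpha}$ by splitting the mass of each band among its children in proportion to $|B'|^{s}$-type weights; bounded covariance guarantees that mass and diameter of a ball scale comparably, so a Frostman estimate of the form $\mu(B(E,r)) \le C\, r^{s_* - \eps}$ gives $\dim_H \Sigma_{\lambda,\alpha} \ge s_* - \eps$, and an analogous counting argument gives $\dim_B^+ \Sigma_{\lambda,\alpha} \ge s^* - \eps$. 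Letting $\eps \downarrow 0$ completes both identities.

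The main obstacle is unquestionably the uniform bounded covariance estimate. Unlike the Fibonacci case, where a single substitution rule controls every generation, a general frequency produces infinitely many distinct local configurations, one family for each value of $a_n$, and the length ratios must be controlled simultaneously across all of them with constants independent of $n$. Securing this requires a careful hyperbolicity analysis of the trace map restricted to the Sturmian invariant surfaces, together with explicit curvature and transversality estimates for the bands, and it is precisely here that the hypothesis $\lambda \ge 24$ is consumed.
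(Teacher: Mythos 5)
This statement is not proved in the paper at all: it is quoted verbatim from Liu--Qu--Wen \cite{LQW14} and used as a black box (the paper's own contribution starts after it, with the Gauss-map invariance of $s_*$ and $s^*$). The only indication the paper gives of how the result is established appears inside the proof of Lemma~\ref{l.first}: the inequalities $\dim_H \Sigma_{\lambda,\alpha} \le s_*$ and $\dim_B^+ \Sigma_{\lambda,\alpha} \ge s^*$ come from the general multitype Moran set theory of \cite[Proposition~1]{LW05}, while the two hard inequalities, $\dim_B^+ \Sigma_{\lambda,\alpha} \le s^*$ and $\dim_H \Sigma_{\lambda,\alpha} \ge s_*$, are the content of Sections~4.1 and 4.2 of \cite{LQW14}. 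Your outline --- Raymond's hierarchy of spectral generating bands indexed by the continued fraction coefficients, the Moran equation defining $s_k$, bounded covariance and distortion at large coupling (Theorem~\ref{t.boundedcovariation} here), and a mass-distribution argument for the Hausdorff lower bound --- is indeed the architecture of that proof, so at the level of strategy you have reconstructed the right skeleton.

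As a proof, however, the proposal has two genuine gaps. First, essentially all of the analytic content is deferred: the two-sided band length estimates and the bounded covariance principle \emph{are} the hard part of \cite{LQW14}, and you only name them as ``the main obstacle,'' offering a speculative route (uniform hyperbolicity of the trace map on invariant surfaces) that is not how they are obtained --- for a general frequency there is no single trace map to which such an analysis applies, and the threshold $\lambda \ge 24$ in \cite{LQW14} comes out of explicit inductive inequalities on band lengths, not out of a hyperbolicity estimate. Second, your assignment of which of the four inequalities are easy and which are hard is partly backwards. You claim $\dim_B^+ \Sigma_{\lambda,\alpha} \le s^*$ follows ``directly'' from the generation-$k$ cover; it does not. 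Because the coefficients $a_n$ are unbounded, bands of the same generation can have wildly disparate lengths, so any single box-counting scale $\delta$ cuts across many generations, and controlling the number of bands of length at least $\delta$ is precisely the work of \cite[Section~4.1]{LQW14}; the failure of the naive covering argument is the very reason $\dim_H$ and $\dim_B^+$ can differ for these spectra \cite{LPW07}. Conversely, $\dim_B^+ \Sigma_{\lambda,\alpha} \ge s^*$, which you present as requiring a counting argument built on the Frostman measure, is one of the two inequalities that already follow from the general Moran machinery of \cite{LW05}. So the proposal is a correct high-level map of the known proof, but the steps it treats as routine include the two that are genuinely difficult.
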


This result suggests that we should study the invariance of $s_*(\lambda,\alpha)$ and $s^*(\lambda,\alpha)$ under the Gauss map
$$
G : \alpha \mapsto \left\{ \frac{1}{\alpha} \right\},
$$
where $\{ x \}$ denotes the fractional part of $x$, that is, $\{ x \} = x - \lfloor x \rfloor$.

Each irrational $\alpha \in (0,1)$ has a continued fraction expansion
$$
\alpha = \cfrac{1}{a_1 + \cfrac{1}{a_2 + \cfrac{1}{a_3 + \cdots}}} =: [a_1,a_2,a_3,\ldots] ,
$$
with uniquely determined $a_k \in \Z_+ = \{ 1, 2, 3, \ldots \}$.

Note that the Gauss map, restricted to $\alpha \in (0,1) \setminus \Q$, results in a shift of the sequence of continued fraction coefficients, that is, it sends $[a_1,a_2,a_3,\ldots]$ to $[a_2,a_3,a_4,\ldots]$.

We will show the following invariance result:

\begin{theorem}\label{t.invariance}
For $\lambda \ge 24$ and $\alpha \in (0,1) \setminus \Q$, we have $s_*(\lambda,\alpha) = s_*(\lambda,G(\alpha))$ and $s^*(\lambda,\alpha) = s^*(\lambda,G(\alpha))$.
\end{theorem}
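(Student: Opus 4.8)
The plan is to prove the invariance of $s_*$ and $s^*$ under the Gauss map $G$ by directly analyzing how the Moran-type formulas defining these pre-dimensions transform when the continued fraction expansion $[a_1, a_2, a_3, \ldots]$ is shifted to $[a_2, a_3, a_4, \ldots]$. Since the pre-dimensions are defined via limits of quantities built from the scaling behavior of the spectrum at successive levels of the continued fraction approximation, and since these scaling factors are organized according to the continued fraction coefficients $a_k$, shifting the sequence should only delete the contribution of the first coefficient $a_1$ while otherwise preserving the structure.

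First I would write out explicitly the definitions of $s_*(\lambda,\alpha)$ and $s^*(\lambda,\alpha)$ from \eqref{e.sdef} (to be recalled in Section~\ref{s.lqw}), identifying the sequence of length scales and covering multiplicities indexed by the continued fraction levels $k = 1, 2, 3, \ldots$. The key point is that the pre-dimensions are defined through $\liminf$ and $\limsup$ as $k \to \infty$ of ratios of the form $\log(\text{number of pieces at level }k)/(-\log(\text{size of pieces at level }k))$, or a suitable Moran-type variant thereof. Under $G$, the relevant quantities at level $k$ for $\alpha$ correspond to the quantities at level $k-1$ for $G(\alpha)$, up to a fixed, $k$-independent correction coming from the first renormalization step governed by $a_1$.

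The central step is then a \emph{bounded covariance} estimate: I would invoke the principle of bounded covariance from \cite{LQW14} (recalled in Section~\ref{s.lqw}) to control the ratio between the scaling data at consecutive levels by a constant independent of $k$. This bounded covariance guarantees that the single removed term (associated with $a_1$) contributes only a bounded multiplicative or additive perturbation to both the numerator and the denominator of the Moran ratios. Because $s_*$ and $s^*$ are defined as $\liminf$ and $\limsup$ over $k \to \infty$, any contribution confined to finitely many initial levels, or any uniformly bounded per-level correction that does not accumulate, washes out in the limit. Thus one concludes $s_*(\lambda, \alpha) = s_*(\lambda, G(\alpha))$ and $s^*(\lambda, \alpha) = s^*(\lambda, G(\alpha))$.

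The main obstacle I anticipate is verifying that the shift in the continued fraction index interacts cleanly with the specific Moran-type structure of the pre-dimensions, rather than merely at the level of crude asymptotics. Concretely, the pre-dimensions in \cite{LQW14} are typically not simple ratios of logarithms but are defined through solutions $s$ of Moran equations $\sum_i (\text{length}_i)^s \approx 1$ over a generation of pieces, and one must check that deleting the first generation's worth of data (the $a_1$ contribution) shifts the entire hierarchy of generations by one step without distorting the limiting value of $s$. The delicate part will be ensuring that the bounded covariance bound is strong enough to show that the pre-dimension is genuinely a function only of the \emph{tail} of the continued fraction expansion, so that the finite initial discrepancy introduced by $G$ is truly negligible in the $\liminf$/$\limsup$. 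This is precisely where the uniformity (in $k$) of the bounded covariance constant is essential.
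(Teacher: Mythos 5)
Your proposal assembles the right tools---the shift structure of the continued fraction under $G$, the bounded covariation principle of \cite{LQW14}, and the stability of Moran exponents under uniformly bounded multiplicative perturbations (which the paper isolates as Proposition~\ref{p.simple}; note the mechanism there is not that the error is ``confined to finitely many levels,'' but that $\max_n b_k(n) \to 0$, so the factor $\bigl(\max_n b_k(n)\bigr)^{\varepsilon}$ eventually beats any fixed constant $C^{s_k+\varepsilon}$). However, the structural step on which your plan rests is not correct as stated: there is no correspondence between the full level-$k$ data for $\alpha$ and the full level-$(k-1)$ data for $G(\alpha)$ ``up to a fixed correction from the first renormalization step.'' The collections of generating bands $\mathcal{G}_k$ for $\alpha$ and $\mathcal{G}_{k-1}$ for $G(\alpha)$ are genuinely different trees: already at the first level, $\alpha$ has $2a_1$ generating bands while $G(\alpha)$ has $2a_2$; suffixes of admissible $\alpha$-words may begin with the transitions $(\mathrm{II,I})$ or $(\mathrm{II,III})$, which are forbidden as initial symbols of $G(\alpha)$-words; and even for words that do correspond, the associated bands are different intervals, since the trace polynomials for $\alpha$ and $G(\alpha)$ differ (e.g., $M_1 = M_{-1}M_0^{a_1}$ versus $M_{-1}M_0^{a_2}$). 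So ``deleting the first generation'' does not transform one hierarchy into the other, and the Moran sums you propose to compare do not even have the same number of terms.

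The paper's proof circumvents this by comparing \emph{sub}-hierarchies rather than full ones: it fixes the single word $w = ((\mathrm{III,I}),a_1,1) \in \Omega^{(\alpha)}_1$ and observes that the continuations $u$ with $wu \in \Omega^{(\alpha)}_*$ are indexed by exactly the same symbolic words as the bands of $\Sigma_{\lambda,G(\alpha)}$ contained in $B_\mathrm{I}$; bounded covariation then makes the corresponding lengths $|B_{wu}|$ and $|B_u|$ uniformly comparable, and Proposition~\ref{p.simple} equates the Moran exponents of the two sub-hierarchies. But this creates an obligation that your proposal never addresses and which is the real content of the argument: one must show that the Moran exponents computed from the descendants of a \emph{single} band coincide with the global pre-dimensions $s_*(\lambda,\cdot)$ and $s^*(\lambda,\cdot)$. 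This localization is supplied by Lemma~\ref{l.first} and Proposition~\ref{p.localdim}, and it requires knowing that the dimension formulas of \cite{LQW14} ($\dim_\mathrm{H} = s_*$, $\dim_\mathrm{B}^+ = s^*$) remain valid for the spectrum restricted to a band, together with another application of bounded covariation to compare distinct bands of the same type. Without this step---or with the nonexistent global level-shift bijection in its place---the argument does not close; so, although your two main tools are the same as the paper's, the proposal as written has a genuine gap.
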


Theorems~\ref{t.lqw} and \ref{t.invariance} imply the invariance of $\dim_H \Sigma_{\lambda,\alpha}$ and $\dim_B^+(\Sigma_{\lambda,\alpha})$ under the Gauss map, for each fixed $\lambda \ge 24$. Since these dimensions are, for $\lambda$ fixed, measurable functions of $\alpha$ (see Theorem~\ref{t.mm} below), we obtain the following strengthening of Theorem~\ref{t.main}.

\begin{theorem}\label{t.main2}
Suppose $\mu$ is an ergodic Borel probability measure for the Gauss map $G$. Then, for every $\lambda \ge 24$, both the Hausdorff dimension and the upper box counting dimension of $\Sigma_{\lambda,\alpha}$ are $\mu$-almost everywhere constant in $\alpha$.
\end{theorem}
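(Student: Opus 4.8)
The plan is to combine the two structural inputs that have already been set up—the identification of the dimensions with the pre-dimensions (Theorem~\ref{t.lqw}) and the invariance of the pre-dimensions under the Gauss map (Theorem~\ref{t.invariance})—with a standard ergodicity argument. Fix $\lambda \ge 24$ and introduce the functions $f_*(\alpha) = \dim_H \Sigma_{\lambda,\alpha}$ and $f^*(\alpha) = \dim_B^+ \Sigma_{\lambda,\alpha}$ on $(0,1) \setminus \Q$. First I would record that $f_* = s_*(\lambda,\cdot)$ and $f^* = s^*(\lambda,\cdot)$ by Theorem~\ref{t.lqw}, and then apply Theorem~\ref{t.invariance} to conclude $f_*(\alpha) = f_*(G(\alpha))$ and $f^*(\alpha) = f^*(G(\alpha))$ for every irrational $\alpha$. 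The point worth emphasizing is that this invariance holds in the exact pointwise sense, not merely $\mu$-almost everywhere, so no measure-theoretic care is needed at this stage.

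Next I would invoke the measurability of $f_*$ and $f^*$ supplied by Theorem~\ref{t.mm}. With measurability together with exact $G$-invariance in hand, the conclusion is the elementary ergodic-theoretic fact that a measurable function invariant under an ergodic transformation is almost everywhere constant. Concretely, for any $c \in \R$ the sublevel set $A_c = \{\alpha : f_*(\alpha) \le c\}$ satisfies $G^{-1}(A_c) = A_c$ because $f_* \circ G = f_*$, so $A_c$ is a measurable $G$-invariant set and hence has $\mu$-measure $0$ or $1$ by ergodicity of $\mu$. Since $f_*$ takes values in the bounded interval $[0,1]$, the nondecreasing function $c \mapsto \mu(A_c)$ jumps from $0$ to $1$ at a single value $c_0 = \inf\{c : \mu(A_c) = 1\}$, which forces $f_* = c_0$ for $\mu$-almost every $\alpha$. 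The identical argument, applied to $f^*$, produces a constant value for the upper box counting dimension.

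The main obstacle in this theorem does not lie in the present deduction, which is routine, but upstream in Theorem~\ref{t.invariance}: proving the exact Gauss-map invariance of the pre-dimensions is where the genuine work resides, since it requires converting the shift on continued-fraction coefficients into a controlled rescaling of the Moran-type covering data through the principle of bounded covariance. Granting that input, the only point here requiring mild attention is that $f_*$ and $f^*$ are defined $\mu$-almost everywhere, i.e.\ that $\mu$ gives no mass to the rationals. This is automatic: rationals have finite continued fraction expansions, so their $G$-orbits terminate and cannot support a $G$-invariant probability measure, while periodic orbits of $G$ (the eventually periodic, hence quadratic irrational, expansions) lie entirely among the irrationals. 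Thus $\mu$ concentrates on $(0,1)\setminus\Q$ and the argument applies without modification.
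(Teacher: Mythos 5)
Your argument follows the same route as the paper's own proof: identify the dimensions with the pre-dimensions via Theorem~\ref{t.lqw}, obtain exact pointwise $G$-invariance from Theorem~\ref{t.invariance}, and then apply the standard fact that a measurable $G$-invariant function is constant $\mu$-almost everywhere for every $G$-ergodic Borel probability measure $\mu$. Your spelled-out sublevel-set argument and your closing remark that $\mu$ cannot charge the rationals (rational $G$-orbits terminate, so by Poincar\'e recurrence no invariant probability measure on $(0,1)$ can give them positive mass) are both fine, and in fact more detailed than what the paper records.

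The one step that does not stand as written is the measurability claim. Theorem~\ref{t.mm} only asserts that $K \mapsto \dim_\mathrm{H} K$ and $K \mapsto \dim_\mathrm{B}^+ K$ are Borel functions on the space $\mathcal{K}(\R)$ of compact subsets of $\R$ with the Hausdorff metric; by itself it says nothing about the maps $\alpha \mapsto \dim_\mathrm{H} \Sigma_{\lambda,\alpha}$ and $\alpha \mapsto \dim_\mathrm{B}^+ \Sigma_{\lambda,\alpha}$ as functions of the frequency. To conclude that $f_*$ and $f^*$ are Borel in $\alpha$ you must also know that $(0,1) \setminus \Q \ni \alpha \mapsto \Sigma_{\lambda,\alpha} \in \mathcal{K}(\R)$ is Borel, so that the composition with the dimension functionals is Borel. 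This is precisely what Theorem~\ref{t.bit} (Bellissard--Iochum--Testard: continuity of $\alpha \mapsto \Sigma_{\lambda,\alpha}$ in the Hausdorff metric) supplies, and it is why the paper's proof invokes Theorems~\ref{t.bit} and \ref{t.mm} together. This is a genuine but small and easily repaired omission: insert the continuity of the spectrum in $\alpha$, and your proof coincides with the paper's.
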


Theorem~\ref{t.main} follows from Theorem~\ref{t.main2} since the Gauss map has an ergodic measure which is mutually absolutely continuous relative to Lebesgue measure. Of course $G$ has many other ergodic measures and Theorem~\ref{t.main2} gives information about the typical behavior with respect to any of them as well.
As a simple illustration let us consider an atomic ergodic invariant measure of $G$ (i.e., a measure supported on a periodic orbit of $G$). In this case Theorem~\ref{t.main2} implies that for any two frequencies with eventually periodic continued fraction expansion with common periodic part, the dimensions of the spectrum must be the same. This fact was known before \cite{G14, Me14} for all $\lambda > 0$, not only for $\lambda \ge 24$.

As a different example, consider all frequencies that have only $1$'s and $2$'s in their continued fraction expansion. Then Theorem~\ref{t.main2} implies that for almost all of them (with respect to a Bernoulli measure on $\{ 1,2 \}^{\Z_+}$), the Hausdorff dimension of the spectrum is the same (and the same statement holds for the upper box counting dimension).

\section{Generating Bands, Symbolic Coding, and Pre-Dimensions}\label{s.lqw}

In this section we summarize the parts of \cite{LQW14} we will need in our proof of Theorem~\ref{t.invariance}. Naturally, the presentation will be succinct and we refer the reader to \cite{LQW14} and its predecessors \cite{DEGT, FLW11, LPW07, LW04, LW05, R97} for motivation and more extensive discussion of the definitions below.

Fix a coupling constant $\lambda > 0$ and a frequency $\alpha \in (0,1) \setminus \Q$. Consider the continued fraction expansion of the frequency, $\alpha = [a_1,a_2,a_3,\ldots]$. The $k$-th partial quotient of $\alpha$ is given by $\frac{p_k}{q_k}$, where
\begin{align*}
p_{-1} & = 1, \; p_0 = 0, \; p_{k+1} = a_{k+1} p_k + p_{k-1}, \; k \ge 0, \\
q_{-1} & = 0, \; q_0 = 1, \; q_{k+1} = a_{k+1} q_k + q_{k-1}, \; k \ge 0.
\end{align*}

For $E \in \R$, define the matrices
$$
M_{-1}(E) = \begin{pmatrix} 1 & - \lambda \\ 0 & 1 \end{pmatrix}, \; M_0(E) = \begin{pmatrix} E & - 1 \\ 1 & 0 \end{pmatrix}, \; M_{k+1}(E) = M_{k-1}(E) M_k(E)^{a_{k+1}}, \; k \ge 0.
$$
Furthermore, define for $k \ge 0$, $p \ge -1$,
$$
t_{(k,p)}(E) = \mathrm{Tr} \left( M_{k-1}(E) M_k(E)^p \right), \; \sigma_{(k,p)} = \{ E \in \R : | t_{(k,p)}(E) | \le 2 \}.
$$
Then, $t_{(k,p)}$ is a polynomial of degree $p q_k + q_{k-1}$ which is strictly monotone on each connected component $B_{(k,p)}$ of $\sigma_{(k,p)}$ and, moreover, $t_{(k,p)}(B_{(k,p)}) = [-2,2]$. Such a connected component $B_{(k,p)}$ of $\sigma_{(k,p)}$ is usually called a band.

Let now $\lambda > 4$. Then we can define for $k \ge 0$ three families of bands:
\begin{itemize}

\item A band of $\sigma_{(k,1)}$ that is contained in a band of $\sigma_{(k,0)}$ is of type (k,I).

\item A band of $\sigma_{(k+1,0)}$ that is contained in a band of $\sigma_{(k,-1)}$ is of type (k,II).

\item A band of $\sigma_{(k+1,0)}$ that is contained in a band of $\sigma_{(k,0)}$ is of type (k,III).

\end{itemize}
Denote by $\mathcal{G}_k$ the collection of these bands; they are called the spectral generating bands of order $k$. We have
\begin{equation}\label{e.specdesc}
\Sigma_{\lambda,\alpha} = \bigcap_{k \ge 0} \bigcup_{B \in \mathcal{G}_k} B.
\end{equation}

For $k \ge 1$, define $s_k$ to be the unique number in $[0,1]$ for which
\begin{equation}\label{e.skdef}
\sum_{B \in \mathcal{G}_k} |B|^{s_k} = 1.
\end{equation}
The lower and upper pre-dimensions of $\Sigma_{\lambda,\alpha}$ are defined by
\begin{equation}\label{e.sdef}
s_*(\lambda,\alpha) = \liminf_{k \to \infty} s_k, \quad s^*(\lambda,\alpha) = \limsup_{k \to \infty} s_k.
\end{equation}

The matrix
\begin{equation}\label{e.matrix}
\begin{pmatrix} 0 & 1 & 0 \\ a_{k+1} + 1 & 0 & a_{k+1} \\ a_{k+1} & 0 & a_{k+1} - 1 \end{pmatrix}
\end{equation}
governs that combinatorial relation between the three types of bands as one moves from level $k$ to $k+1$, that is, any band in $\mathcal{G}_k$ that is of type (k,I) contains $0$ bands of $\mathcal{G}_{k+1}$ that are of type (k+1,I), $1$ band of $\mathcal{G}_{k+1}$ that is of type (k+1,II), and $0$ bands of $\mathcal{G}_{k+1}$ that are of type (k+1,III), and so on. This suggests a symbolic coding. Consider the possible transitions,
$$
\mathcal{E} = \{ (\mathrm{I,II}), (\mathrm{II,I}), (\mathrm{II,III}), (\mathrm{III,I}), (\mathrm{III,III}) \},
$$
and define for $e \in \mathcal{E}$ and $n \in \Z_+$,
$$
\tau_e(n) = \begin{cases} 1 & e = (\mathrm{I,II}) \\ n+1 & e = (\mathrm{II,I}) \\ n & e = (\mathrm{II,III}) \\ n & e = (\mathrm{III,I}) \\ n-1 & e = (\mathrm{III,III}) \end{cases}.
$$
Then define
\begin{align*}
\mathcal{E}_n & = \{ (e,\tau_e(n),\ell) : e \in \mathcal{E}, 1 \le \ell \le \tau_e(n) \}, \\
\mathcal{E}_n^* & = \{ (e,\tau_e(n),\ell) \in \mathcal{E}_n : e \not= (\mathrm{II,I}), (\mathrm{II,III}) \}.
\end{align*}
For any $n, n' \in \Z_+$ and $(e, \tau_e(n), \ell) \in \mathcal{E}_n$, $(e', \tau_{e'}(n'), \ell') \in \mathcal{E}_{n'}$, we say that $(e, \tau_e(n), \ell) (e', \tau_{e'}(n'), \ell')$ is admissible, denoted by $(e, \tau_e(n), \ell) \to (e', \tau_{e'}(n'), \ell')$, if the second component of $e$ and the first component of $e'$ coincide.

Set $\Omega^{(\alpha)}_1 = \mathcal{E}_{a_1}^*$ and, for $k \ge 2$,
$$
\Omega^{(\alpha)}_k = \left\{ \omega \in \mathcal{E}_{a_1}^* \times \prod_{j=2}^k  \mathcal{E}_{a_j} : \omega = \omega_1 \omega_2 \ldots \omega_k \text{ s.t. } \omega_j \to \omega_{j+1} \text{ for every } 1 \le j \le k-1 \right\},
$$
and $\Omega^{(\alpha)}_* = \bigcup_{k \in \Z_+} \Omega^{(\alpha)}_k$.

Given any $w \in \Omega^{(\alpha)}_k$, we define the associated band $B_w$ inductively as follows. Let $B_\mathrm{I} = [\lambda - 2,\lambda + 2]$ be the unique band in $\mathcal{G}_0$ of type (0,I) and let $B_\mathrm{III} = [-2,2]$ be the unique band in $\mathcal{G}_0$ of type (0,III). Suppose $w \in \Omega^{(\alpha)}_1$. If $w = ((\mathrm{I,II}), 1, 1)$, then define $B_w$ to be the unique band of type (1,II) that is contained in $B_\mathrm{I}$. If $w = ((\mathrm{III,I}), \tau_{(\mathrm{III,I})}(a_1), \ell)$, then define $B_w$ to be the $\ell$-th band (counted from the left) of type (1,I) that is contained in $B_\mathrm{III}$. Finally, if $w = ((\mathrm{III,III}), \tau_{(\mathrm{III,III})}(a_1), \ell)$, then define $B_w$ to be the $\ell$-th band (counted from the left) of type (1,III) that is contained in $B_\mathrm{III}$. Next, if $B_{w'}$ has been defined for all $w' \in \Omega^{(\alpha)}_{k-1}$, given $w \in \Omega^{(\alpha)}_k$, write $w = w' (e,\tau_e(a_k),\ell)$ with $w' \in \Omega^{(\alpha)}_{k-1}$. Writing $e = (\mathrm{T,T'})$, define $B_w$ to be the $\ell$-th band of type (k,$\mathrm{T'}$) inside $B_{w'}$. With these notations, \eqref{e.specdesc} can be rewritten as
\begin{equation}\label{e.specdesc2}
\Sigma_{\lambda,\alpha} = \bigcap_ {k \ge 0} \bigcup_{w \in \Omega^{(\alpha)}_k} B_w.
\end{equation}

One of the crucial technical tools in \cite{LQW14} is the principle of bounded covariation; see Theorem~3.3 in that paper:

\begin{theorem}[Liu-Qu-Wen 2014]\label{t.boundedcovariation}
There are absolute constants $C_1,C_2$ such that for $\lambda \ge 24$ and $\alpha \in (0,1) \setminus \Q$, we have
$$
\eta^{-1} \frac{|B_{wu}|}{|B_w|} \le \frac{|B_{\tilde wu}|}{|B_{\tilde w}|} \le \eta \frac{|B_{wu}|}{|B_w|},
$$
where $\eta = C_1 \exp( 2 C_2 \lambda)$, whenever $w, wu, \tilde w, \tilde w u \in \Omega^{(\alpha)}_*$.
\end{theorem}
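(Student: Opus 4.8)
The plan is to reduce the ratios of band lengths to ratios of derivatives of the governing trace polynomials, and then to establish a uniform bounded-distortion estimate for the trace-map cocycle. Each band $B_w \in \mathcal{G}_k$ is a connected component of some $\sigma_{(k,p)}$ (with $p$ determined by the type of $B_w$), on which the trace polynomial---call it $t_w$---is strictly monotone and satisfies $t_w(B_w)=[-2,2]$. Changing variables by $s = t_w(E)$ gives
\be
|B_w| = \int_{-2}^{2} \frac{ds}{|t_w'(E(s))|},
\ee
so that $|B_w|$ lies between $4/\max_{B_w}|t_w'|$ and $4/\min_{B_w}|t_w'|$. Consequently $|B_{wu}|/|B_w| \asymp |t_w'(\xi)|/|t_{wu}'(\xi')|$ for suitable evaluation points $\xi \in B_w$, $\xi' \in B_{wu}$, and the whole problem becomes one of comparing derivatives of trace polynomials.

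The first technical input I would establish is bounded distortion within a band: $\log|t_w'|$ oscillates across $B_w$ by at most a constant depending only on $\lambda$, uniformly in $w$ and in the level $k$. This is exactly where the hypothesis $\lambda \ge 24$ is used---large coupling makes the transfer cocycle $M_k(E)$ uniformly hyperbolic in $E$ across the spectrum, which bounds the nonlinearity of $t_w$ on $B_w$ and yields $\max_{B_w}|t_w'| \le C(\lambda)\,\min_{B_w}|t_w'|$. With this in hand the evaluation points $\xi,\xi'$ above may be chosen freely, so the ratio no longer depends on where inside the bands the derivatives are measured.

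Next I would factor the derivative of the longer polynomial through the cocycle. Writing the matrix product attached to $wu$ as the product of the block accumulated along $w$ and the block accumulated along the continuation $u$, the chain rule expresses $t_{wu}'$ as $t_w'$ times a transition factor $\Phi_u$ that depends on $u$ and on the data that $w$ contributes (the accumulated matrix and the base point inside $B_w$). Thus $|B_{wu}|/|B_w|$ equals a function of $u$ multiplied by a factor whose dependence on the prefix $w$ enters only through this accumulated hyperbolic data. Comparing two prefixes $w$ and $\tilde w$ for which $wu,\tilde w u \in \Omega^{(\alpha)}_*$, the two bands $B_w$, $B_{\tilde w}$ have the same level and type and admit the same admissible continuation $u$; the difference between $\Phi_u$ evaluated along $w$ and along $\tilde w$ comes solely from the different base points and accumulated matrices. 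The uniform hyperbolicity from the previous step confines this difference to a single multiplicative window, giving the asserted bound with $\eta = C_1\exp(2C_2\lambda)$.

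The main obstacle is precisely the uniform bounded-distortion/nonlinearity estimate of the second step, carried out with explicit and $k$-independent dependence on $\lambda$. One must control the oscillation of the expansion rate of the accumulated matrix product across an entire band, simultaneously for all three band types and every admissible transition in $\mathcal{E}$, and verify that the contracting directions produced along $w$ and along $\tilde w$ align well enough that their discrepancy does not grow with the level $k$. The exponential shape $\exp(2C_2\lambda)$ is forced by the fact that matrix norms and band spacings scale like $\exp(\lambda)$, so the distortion accrued in one renormalization step is exponential in $\lambda$; the crucial---and most delicate---point is that it remains bounded uniformly in $k$, which is what prevents the covariation constant from degenerating as the words grow.
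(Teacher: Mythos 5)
A point of order first: the paper you were given does not prove this statement at all. Theorem~\ref{t.boundedcovariation} is imported verbatim from \cite{LQW14} (it is Theorem~3.3 there), and the present paper uses it strictly as a black box to derive Gauss-map invariance of the pre-dimensions. So the only meaningful comparison is with the proof in \cite{LQW14}. Measured against that proof, your overall architecture is reasonable: the change-of-variables bound $4/\max_{B_w}|t_w'| \le |B_w| \le 4/\min_{B_w}|t_w'|$ is correct, a uniform bounded-distortion estimate for $t_w'$ on its band is indeed one of the key lemmas in \cite{LQW14}, and the heart of the matter is exactly what you say it is: showing that the factor attached to the continuation $u$ is independent of the prefix up to constants that do not grow with the level.

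However, two of the mechanisms you invoke fail as stated, and they are the ones carrying all the weight. First, there is no chain rule available: $t_{wu}$ is not a composition $t_u \circ t_w$, and $B_{wu}$ is not a $t_w$-preimage of $B_u$ --- the hierarchy of spectral generating bands is not a conformal iterated function system. The derivative of the trace of a matrix product obeys a Leibniz-type identity, not a multiplicative one, and upgrading it to the approximate multiplicativity $|t_{wu}'| \approx |t_w'|\cdot \Phi(u)$, uniformly in $k$, \emph{is} the content of the theorem; in \cite{LQW14} this is achieved through inductive band-length estimates in the spirit of \cite{R97, LW04}, based on the recursion relations among the traces $t_{(k,p)}$ and elementary largeness estimates for off-band traces at large coupling, not through any composition structure. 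Second, your proposed source of uniformity --- that large $\lambda$ makes the cocycle $M_k(E)$ ``uniformly hyperbolic in $E$ across the spectrum'' --- is false: the Schr\"odinger cocycle is uniformly hyperbolic precisely for $E \notin \Sigma_{\lambda,\alpha}$, and the Lyapunov exponent vanishes on the spectrum (as this paper itself recalls in Section~\ref{s.remarks}). The hypothesis $\lambda \ge 24$ enters \cite{LQW14} through trace inequalities guaranteeing the band structure and the quantitative gap/band estimates, not through hyperbolicity of the cocycle on $\Sigma_{\lambda,\alpha}$. As it stands, then, your proposal correctly names the main obstacle but does not overcome it; filling the gap would amount to reconstructing Section~3 of \cite{LQW14}, whereas in the context of this paper the correct ``proof'' is simply the citation.
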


\section{Bounded Covariation Implies Invariance of Pre-Dimensions}\label{s.invariance}

We will need the following simple statement.

\begin{prop}\label{p.simple}
Suppose $\mathcal{B}_k=\{b_k(n)\}_{n=1, \ldots, N_k}$, $k \in \Z_+$, is a collection of positive numbers such that $\sum_{n=1}^{N_k}b_k(n)< 1$ for $k \ge k_0$ and $\max_{n=1, \ldots N_k}b_k(n)\to 0$ as $k\to \infty$. Let $s_k$ be the unique solution of the equation $\sum_{n=1}^{N_k}(b_k(n))^{s_k}=1$. Set
\be\label{e.sstar1}
s_*=\liminf_{k\to \infty} s_k, \ \ \ s^*=\limsup_{k\to \infty}s_k.
\ee
Suppose $\mathcal{D}_k=\{d_k(n)\}_{n=1, \ldots, N_k}$, $k\in \Z_+$, is a collection of positive numbers such that $\sum_{n=1}^{N_k}d_k(n)< 1$ for $k \ge k_1$ and $\max_{n=1, \ldots N_k}d_k(n)\to 0$ as $k\to \infty$. Let $\delta_k$ be the unique solution of the equation $\sum_{n=1}^{N_k}(d_k(n))^{\delta_k}=1$. Set
\be\label{e.deltastar}
\delta_*=\liminf_{k\to \infty} \delta_k, \ \ \ \delta^*=\limsup_{k\to \infty}\delta_k.
\ee
If there exists $C\ge 1$ such that for any $k\in \Z_+$ and $n\in \{1, \ldots, N_k\}$, we have
$$
C^{-1}\le \frac{b_k(n)}{d_k(n)}\le C,
$$
then $s_*=\delta_*$ and $s^*=\delta^*$.
\end{prop}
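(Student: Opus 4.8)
The plan is to show that the two exponent sequences are asymptotically equal, i.e. that $|s_k - \delta_k| \to 0$ as $k \to \infty$. Since replacing a sequence by another one at vanishing distance changes neither its $\liminf$ nor its $\limsup$, this at once yields $s_* = \delta_*$ and $s^* = \delta^*$. Before doing anything else I would record the elementary facts that make the quantities well-defined: for each $k$ the map $s \mapsto \sum_{n=1}^{N_k} (d_k(n))^s$ is continuous and strictly decreasing, equals $N_k \ge 1$ at $s = 0$, and is $< 1$ at $s = 1$ once $k \ge k_1$; hence $\delta_k$ exists, is unique, and lies in $[0,1)$ (and the same holds for $s_k$ for $k \ge k_0$). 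Throughout I write $F_k(s) = \sum_{n=1}^{N_k} (d_k(n))^s$ and $m_k = \max_{n} d_k(n)$.

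The first key step is a two-sided comparison of the two Moran sums evaluated at a common exponent. Using $C^{-1} \le b_k(n)/d_k(n) \le C$ and $0 \le s_k \le 1$, I write
\[
(d_k(n))^{s_k} = (b_k(n))^{s_k}\,(b_k(n)/d_k(n))^{-s_k},
\]
and bound the last factor in $[C^{-1},C]$; summing over $n$ and invoking $\sum_{n} (b_k(n))^{s_k} = 1$ gives $C^{-1} \le F_k(s_k) \le C$. In words: feeding the ``wrong'' exponent $s_k$ into the $d$-sum produces a value trapped in $[C^{-1},C]$, whereas the ``right'' exponent $\delta_k$ produces exactly $1$.

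The second key step converts this bounded discrepancy in value into a small discrepancy in exponent, and this is precisely where the hypothesis $m_k \to 0$ is used. For $s' \ge s$ I use the elementary bound
\[
F_k(s') = \sum_{n} (d_k(n))^{s}\,(d_k(n))^{s'-s} \le m_k^{\,s'-s}\,F_k(s),
\]
so that raising the exponent by $t$ shrinks the sum by at least the factor $m_k^{t}$. Applying this between $s_k$ and $\delta_k$ (in whichever order they occur), together with $F_k(s_k) \in [C^{-1},C]$ and $F_k(\delta_k) = 1$, yields $(1/m_k)^{|s_k - \delta_k|} \le C$, hence
\[
|s_k - \delta_k| \le \frac{\log C}{\log(1/m_k)}.
\]
Since $m_k \to 0$, the right-hand side tends to $0$, which finishes the proof.

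I expect the genuinely substantive point to be this second step: it is the only place where the condition $\max_n d_k(n) \to 0$ enters, and it is exactly this condition that excludes the failure mode in which a bounded perturbation of the weights shifts the Moran exponent by a definite amount (as would happen if a single weight stayed bounded away from $0$). Everything else is bookkeeping: verifying well-definedness, checking that $s_k \in [0,1)$ so that $C^{\pm s_k} \in [C^{-1},C]$ in the first step, and treating symmetrically the two cases $s_k \ge \delta_k$ and $s_k \le \delta_k$.
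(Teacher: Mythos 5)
Your proof is correct and follows essentially the same route as the paper's: both arguments show $|s_k-\delta_k|\to 0$ by combining the bounded-ratio comparison (costing a factor between $C^{-1}$ and $C$) with the observation that raising the exponent by $t$ shrinks a Moran sum by at least the factor $\bigl(\max_n d_k(n)\bigr)^{t}$, which tends to $0$. The only difference is cosmetic: the paper fixes $\varepsilon>0$ and shows the shifted sum $\sum_n (d_k(n))^{s_k+\varepsilon}$ drops below $1$ for large $k$, whereas you package the same two inequalities into the explicit rate $|s_k-\delta_k|\le \log C/\log(1/m_k)$.
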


\begin{proof}
Notice that from the assumptions it follows that $0\le s_k<1$ and $0\le \delta_k<1$ for all $k \ge \max \{ k_0, k_1 \}$. Fix any $\varepsilon > 0$. Then, for all sufficiently large $k \in \Z_+$, we have
\begin{align*}
\sum_{n=1}^{N_k}(d_k(n))^{s_k+\varepsilon} & \le \sum_{n=1}^{N_k}(Cb_k(n))^{s_k+\varepsilon} \\
& \le C^{s_k+\varepsilon}\left[\sum_{n=1}^{N_k}(b_k(n))^{s_k}\right]\left(\max_{n=1, \ldots, N_k} b_k(n)\right)^\varepsilon \\
& = C^{s_k+\varepsilon}\left(\max_{n=1, \ldots, N_k} b_k(n)\right)^\varepsilon \\
& < 1,
\end{align*}
and hence $\delta_k<s_k+\varepsilon$. Similarly, $s_k<\delta_k+\varepsilon$, so $|s_k-\delta_k|<\varepsilon$ for sufficiently large values of $k$. Therefore,
$$
s_* = \liminf_{k \to \infty} s_k = \liminf_{k \to \infty} \delta_k = \delta_*
$$
and
$$
s^* = \limsup_{k \to \infty} s_k = \limsup_{k \to \infty} \delta_k = \delta^*,
$$
concluding the proof.
\end{proof}

\begin{defi}
For a set $K\subset \mathbb{R}$ and a point $x \in K$, define
$$
\dim_\mathrm{H,\,loc}(K, x) = \lim_{\varepsilon \to 0} \dim_\mathrm{H} \left( K \cap (x-\varepsilon, x+\varepsilon) \right).
$$
Similarly we define
$$
\dim^+_\mathrm{B,\,loc} (K, x) = \lim_{\varepsilon \to 0} \dim^+_\mathrm{B} \left( K \cap (x-\varepsilon, x+\varepsilon) \right).
$$
\end{defi}

The following statement is of independent interest.

\begin{prop}\label{p.localdim}
For any $x \in \Sigma_{\lambda, \alpha}$, we have
$$
\dim_\mathrm{H,\,loc} (\Sigma_{\lambda, \alpha}, x) = \dim_\mathrm{H} \Sigma_{\lambda, \alpha}, \ \ \ \dim^+_\mathrm{B,\,loc} (\Sigma_{\lambda, \alpha}, x) = \dim^+_\mathrm{B} \Sigma_{\lambda, \alpha}.
$$
\end{prop}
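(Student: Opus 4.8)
The plan is to reduce the statement about local dimensions to the global dimension identities from Theorem~\ref{t.lqw} by exploiting the self-similar combinatorial structure encoded in the bands $B_w$ together with the principle of bounded covariation (Theorem~\ref{t.boundedcovariation}). First I would fix $x \in \Sigma_{\lambda,\alpha}$ and $\varepsilon > 0$. Since $x$ lies in the intersection \eqref{e.specdesc2}, for each level $k$ there is a unique word $w^{(k)} \in \Omega^{(\alpha)}_k$ with $x \in B_{w^{(k)}}$, and these words are nested: $w^{(k)}$ is a prefix of $w^{(k+1)}$. Because $\max_{w \in \Omega^{(\alpha)}_k} |B_w| \to 0$ as $k \to \infty$ (the bands shrink, which is exactly the condition guaranteeing $s_k \in [0,1)$ in Proposition~\ref{p.simple}), there exists $k = k(\varepsilon)$ such that $B_{w^{(k)}} \subset (x - \varepsilon, x + \varepsilon)$. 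Hence
$$
\Sigma_{\lambda,\alpha} \cap B_{w^{(k)}} \subseteq \Sigma_{\lambda,\alpha} \cap (x - \varepsilon, x + \varepsilon) \subseteq \Sigma_{\lambda,\alpha},
$$
and by monotonicity of both Hausdorff and upper box dimension it suffices to show that the dimension of the local piece $\Sigma_{\lambda,\alpha} \cap B_{w^{(k)}}$ equals the global dimension, uniformly enough that letting $\varepsilon \to 0$ (equivalently $k \to \infty$) recovers the claim.

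The key step is to recognize $\Sigma_{\lambda,\alpha} \cap B_w$, for a fixed word $w \in \Omega^{(\alpha)}_k$, as a Cantor set whose generating bands at level $k + m$ are exactly the descendants $B_{wu}$ with $u$ ranging over the admissible continuations of $w$ of length $m$. I would apply the Liu--Qu--Wen machinery to this sub-Cantor set: define level-$m$ pre-dimension exponents $\sigma_{k,m}$ by $\sum_{u} |B_{wu}|^{\sigma_{k,m}} = 1$, where the sum is over admissible $u$ of length $m$, and let $\sigma_{k,*} = \liminf_m \sigma_{k,m}$, $\sigma_{k}^* = \limsup_m \sigma_{k,m}$ be the resulting lower and upper pre-dimensions. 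The point is then to compare these with the global exponents $s_{k+m}$ defined by \eqref{e.skdef}. Bounded covariation gives, for any fixed $w$ and any admissible $u$,
$$
\eta^{-1}\,\frac{|B_{wu}|}{|B_w|} \le \frac{|B_{\tilde w u}|}{|B_{\tilde w}|} \le \eta\,\frac{|B_{wu}|}{|B_w|}
$$
whenever $\tilde w, \tilde w u \in \Omega^{(\alpha)}_*$, so the ratios $|B_{wu}|/|B_w|$ are comparable, up to the constant $\eta$, across all choices of the prefix. Summing over the level-$(k+m)$ bands of the full spectrum and grouping them by their length-$k$ prefix, this comparability lets me sandwich $\sum_{B \in \mathcal{G}_{k+m}} |B|^{s}$ between constant multiples of $|B_w|^{s}\sum_u (|B_{wu}|/|B_w|)^{s}$, which is precisely the kind of bounded multiplicative distortion to which Proposition~\ref{p.simple} applies. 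Invoking Proposition~\ref{p.simple} with $b = $ the global band lengths and $d = $ the rescaled local band lengths $|B_{wu}|/|B_w|$ then yields $\sigma_{k,*} = s_*(\lambda,\alpha)$ and $\sigma_k^* = s^*(\lambda,\alpha)$, independent of $k$ and of the chosen word $w$.

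Finally I would translate the pre-dimension equalities back into dimension equalities. The local Cantor set $\Sigma_{\lambda,\alpha} \cap B_w$ has the same Moran structure as $\Sigma_{\lambda,\alpha}$ itself (it is a union of finitely many descendant pieces, each again of the same type), so the Liu--Qu--Wen identification in Theorem~\ref{t.lqw} applies to it verbatim, giving $\dim_\mathrm{H}(\Sigma_{\lambda,\alpha} \cap B_w) = \sigma_{k,*} = s_*(\lambda,\alpha) = \dim_\mathrm{H}\Sigma_{\lambda,\alpha}$ and likewise for the upper box dimension. Since this holds for the piece $B_{w^{(k)}}$ enclosing $x$ at every scale $\varepsilon$, taking $\varepsilon \to 0$ gives the two asserted equalities. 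The main obstacle I anticipate is the bookkeeping in the middle step: one must verify that the admissible continuations $u$ of a fixed prefix $w$ at level $k$ are genuinely in bijection (via concatenation) with the level-$m$ words of an intrinsic Cantor structure on $B_w$, so that the sum defining $\sigma_{k,m}$ ranges over exactly the right index set and the two hypotheses of Proposition~\ref{p.simple}—summability below $1$ and vanishing of the maximal term—hold for the rescaled lengths. Bounded covariation is the tool that makes this distortion uniform, but checking that the constant it supplies is independent of $k$ and $w$ (which it is, being the absolute constant $\eta$) is what makes the limit $\varepsilon \to 0$ legitimate rather than merely scale-by-scale.
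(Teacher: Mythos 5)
Your overall skeleton (localize to a band $B_{w^{(k)}}\subset(x-\varepsilon,x+\varepsilon)$ containing $x$, show that the spectrum inside a single band has full dimension, and transfer the Liu--Qu--Wen identification of dimensions with pre-dimensions to the local Cantor set) matches the paper, and your final step --- that the proofs behind Theorem~\ref{t.lqw} apply verbatim to $\Sigma_{\lambda,\alpha}\cap B_w$ --- is exactly the claim the paper also makes. However, your middle step, where the actual work happens, has a genuine gap, in fact two related ones. First, Proposition~\ref{p.simple} cannot be invoked with $b=$ the global band lengths of $\mathcal{G}_{k+m}$ and $d=$ the (rescaled) lengths of the descendants of the single word $w$: that proposition requires the two collections to be indexed by a \emph{common} index set $\{1,\ldots,N_k\}$ with an element-wise ratio bound $C^{-1}\le b_k(n)/d_k(n)\le C$, whereas your two collections have wildly different cardinalities (all level-$(k+m)$ bands versus only those descending from $w$), so its hypothesis cannot even be formulated. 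Second, the proposed sandwich obtained by ``grouping level-$(k+m)$ bands by their length-$k$ prefix'' is ill-defined: admissibility of a continuation $u$ depends on the type ($\mathrm{I}$, $\mathrm{II}$, or $\mathrm{III}$) of the \emph{last} band of the prefix, so for a prefix $\tilde w$ whose final band type differs from that of $w$, the continuations $u$ with $\tilde wu\in\Omega^{(\alpha)}_*$ need not satisfy $wu\in\Omega^{(\alpha)}_*$, and then $B_{wu}$ does not exist; Theorem~\ref{t.boundedcovariation} only compares $|B_{wu}|/|B_w|$ with $|B_{\tilde wu}|/|B_{\tilde w}|$ when \emph{both} words are admissible. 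So the claimed factorization of $\sum_{B\in\mathcal{G}_{k+m}}|B|^s$ does not go through as stated.

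This is precisely the difficulty the paper's proof is engineered to avoid. It first proves (Lemma~\ref{l.first}) that two bands $B,B'$ of the \emph{same type} $(\tilde k,T)$ satisfy $\dim_\mathrm{H}(B\cap\Sigma_{\lambda,\alpha})=\dim_\mathrm{H}(B'\cap\Sigma_{\lambda,\alpha})$ (and likewise for $\dim_\mathrm{B}^+$): for same-type bands the descendant sets are in natural bijection, the lengths are element-wise comparable by bounded covariation, and Proposition~\ref{p.simple} applies legitimately. It then proves (Lemma~\ref{l.second}) that any interval around $x\in\Sigma_{\lambda,\alpha}$ contains bands of \emph{all three} types of some common level $k'$, because the cube of the transition matrix \eqref{e.matrix} has strictly positive entries. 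Since the global dimension is the maximum of $\dim(\Sigma_{\lambda,\alpha}\cap B_j)$ over all level-$k'$ bands $B_j$, and by Lemma~\ref{l.first} each such value coincides with the value on one of the three bands $B_\mathrm{I},B_\mathrm{II},B_\mathrm{III}$ inside $(x-\varepsilon,x+\varepsilon)$, the local dimension dominates the global one. If you want to salvage your quasi-multiplicativity route, you would have to split the prefixes $\tilde w$ according to the type of their final band and relate the three resulting continuation systems to one another --- which in effect reconstructs the paper's type-by-type argument.
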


\begin{proof}[Proof of Proposition \ref{p.localdim}]
We will start the proof with two lemmas.
\begin{lemma}\label{l.first}
Suppose that $B$ and $B'$ are two bands of the same type $(\tilde k, T)$ for some $\tilde k\in \mathbb{N}$ and $T\in \{I, II, III\}$. Then
$$
\dim_\mathrm{H} (B \cap \Sigma_{\lambda, \alpha}) = \dim_\mathrm{H} (B' \cap \Sigma_{\lambda, \alpha}) \ \ \text{and} \ \ \dim_\mathrm{B}^+ (B \cap \Sigma_{\lambda, \alpha}) = \dim_\mathrm{B}^+ (B' \cap \Sigma_{\lambda, \alpha}).
$$
\end{lemma}

\begin{proof}[Proof of Lemma~\ref{l.first}]
Let $\{b_k(n)\}_{n=1, \ldots, N_k}$ be the set of lengths of all bands of level $k$ for a given $k\ge \tilde k$ contained in the band $B$ and ordered with respect to the combinatorics described in Section \ref{s.lqw}. Let $\{d_k(n)\}_{n=1, \ldots, N_k}$ be the set of lengths of all bands of level $k$ contained in the band $B'$ and ordered in the same way. Theorem~\ref{t.boundedcovariation} implies that the ratios $\frac{b_k(n)}{d_k(n)}$ are uniformly bounded. Hence Proposition~\ref{p.simple} implies that $s_*=\delta_*$ and $s^*=\delta^*$, where $s^*, s_*,  \delta^*, \delta_*$ are defined by (\ref{e.sstar1}) and (\ref{e.deltastar}).

Lemma~\ref{l.first} now follows from the fact that \cite[Theorem~1.1]{LQW14} holds for the spectrum $\Sigma_{\lambda, \alpha}$ within the band $B$ (as well as within the band $B'$), that is, $\dim_\mathrm{H} (\Sigma_{\lambda, \alpha} \cap B) = s_*$ and $\dim^+_\mathrm{B} (\Sigma_{\lambda, \alpha} \cap B) = s^*$. Indeed, the proofs of \cite[Proposition~1]{LW05} (where the inequalities $\dim_\mathrm{H} \Sigma_{\lambda, \alpha} \le s_*$ and  $\dim^+_{B} \Sigma_{\lambda, \alpha} \ge s^*$ are shown), of the inequality $\dim^+_\mathrm{B} \Sigma_{\lambda, \alpha} \le s^*$ given in \cite[Section~4.1]{LQW14}, and of the inequality $\dim_\mathrm{H} \Sigma_{\lambda, \alpha} \ge s_*$ given in \cite[Section~4.2]{LQW14} hold verbatim for the sequence of bands generated in $B$ and the spectrum within $B$.
\end{proof}

\begin{lemma}\label{l.second}
For any $x\in \Sigma_{\lambda, \alpha}$ and any $\varepsilon>0$ there exists $k'\in \mathbb{N}$ such that for any $k\ge k'$ and any $T\in \{I, II, III\}$ there exists a band of type $(k, T)$ that belongs to the interval $(x-\varepsilon, x+\varepsilon)$.
\end{lemma}

\begin{proof}[Proof of Lemma~\ref{l.second}]
The description of the spectrum $\Sigma_{\lambda, \alpha}$ given by (\ref{e.specdesc}) implies that there exists a band $B$ of some level $\bar k$ inside of $(x-\varepsilon, x+\varepsilon)$. Notice that the cube of the matrix (\ref{e.matrix}) does not have any zero entries. This implies that for any $k\ge \bar k+3$ and any $T\in \{I, II, III\}$ the band $B$ contains a band of type $(k, T)$.
\end{proof}
In order to prove Proposition~\ref{p.localdim} it is enough to show that for any $x \in \Sigma_{\lambda, \alpha}$ and any $\varepsilon > 0$, we have
\begin{equation*}
\begin{split}
 \ & \dim_\mathrm{H} (\Sigma_{\lambda, \alpha} \cap (x - \varepsilon, x + \varepsilon)) \ge \dim_\mathrm{H} \Sigma_{\lambda, \alpha}, \ \ \ \text{and} \ \ \ \\
 \ & \dim^+_\mathrm{B} (\Sigma_{\lambda, \alpha} \cap (x - \varepsilon, x + \varepsilon)) \ge \dim^+_\mathrm{B} \Sigma_{\lambda, \alpha}.
\end{split}
\end{equation*}
Let $k' \in \mathbb{N}$ be given by Lemma~\ref{l.second}. Then the interval $(x-\varepsilon, x+\varepsilon)$ contains some bands of type $(k', I)$,  $(k', II)$, and  $(k', III)$. Denote them by $B_I$, $B_{II}$, and $B_{III}$.
Let $\{B_j\}_{j=1, \ldots, N_k}$ be a collection of all bands of level $k'$. Then
\begin{equation*}
\begin{split}
&\dim_\mathrm{H} \Sigma_{\lambda, \alpha} = \max_{j=1, \ldots, N_k} \dim_\mathrm{H} (\Sigma_{\lambda, \alpha} \cap B_j), \\
&\dim_\mathrm{B}^+ \Sigma_{\lambda, \alpha} = \max_{j=1, \ldots, N_k} \dim_\mathrm{B}^+ (\Sigma_{\lambda, \alpha} \cap B_j).
\end{split}
\end{equation*}
Due to Lemma \ref{l.first}, for all $j$, the value $\dim_\mathrm{H} (\Sigma_{\lambda, \alpha} \cap B_j)$ must be equal to $\dim_\mathrm{H} (\Sigma_{\lambda, \alpha} \cap B_I)$, or $\dim_\mathrm{H} (\Sigma_{\lambda, \alpha} \cap B_{II})$, or $\dim_\mathrm{H} (\Sigma_{\lambda, \alpha} \cap B_{III})$. Therefore,
\begin{align*}
\dim_\mathrm{H} \Sigma_{\lambda, \alpha} & = \max \{\dim_\mathrm{H} (\Sigma_{\lambda, \alpha} \cap B_I), \dim_\mathrm{H}  (\Sigma_{\lambda, \alpha} \cap B_{II}), \dim_\mathrm{H} (\Sigma_{\lambda, \alpha} \cap B_{III}) \} \\
& \le \dim_\mathrm{H} (\Sigma_{\lambda, \alpha} \cap (x - \varepsilon, x + \varepsilon))
\end{align*}
and
\begin{align*}
\dim_\mathrm{B}^+ \Sigma_{\lambda, \alpha} & = \max \{ \dim_\mathrm{B}^+ (\Sigma_{\lambda, \alpha} \cap B_I),  \dim_\mathrm{B}^+ (\Sigma_{\lambda, \alpha} \cap B_{II}), \dim_\mathrm{B}^+ (\Sigma_{\lambda, \alpha} \cap B_{III}) \} \\
& \le \dim_\mathrm{B}^+ (\Sigma_{\lambda, \alpha} \cap (x - \varepsilon, x + \varepsilon)).
\end{align*}
This completes the proof of Proposition~\ref{p.localdim}.
\end{proof}

Now we are ready to prove Theorem~\ref{t.invariance}.

\begin{proof}[Proof of Theorem~\ref{t.invariance}]
Denote $\beta = G(\alpha)$. If $\alpha = [a_1, a_2, a_3, \ldots]$, then $\beta = [a_2, a_3, \ldots]$. Consider $B_\mathrm{I} = [\lambda - 2,\lambda + 2]$ -- the  band of type $(0,I)$ and $B_\mathrm{III} = [-2,2]$ -- the band of type $(0, III)$. Set $w = ((III, I), a_1, 1)$, then $B_w$ is one of the $a_1$ bands of type $(1, I)$ contained in $B_{III}$ generating $\Sigma_{\lambda, \alpha}$. Notice that if $wu \in \Omega^{(\alpha)}_*$, then $B_u \subseteq B_I$. Notice also that the set of bands contained in $B_{I}$ generating $\Sigma_{\lambda, \beta}$ is exactly the set of bands of the form $B_{u}$, where $wu \in \Omega^{(\alpha)}_*$. By Theorem~\ref{t.boundedcovariation}, for any sequence $u \in \Omega^{(\beta)}_*$ such that $wu \in \Omega^{(\alpha)}_*$, we have
$$
\eta^{-1} \frac{|B_{u}|}{|B_I|} \le \frac{|B_{wu}|}{|B_{w}|} \le \eta \frac{|B_{u}|}{|B_I|}.
$$
If there are $N_k$ possible sequences $u \in \Omega^{(\beta)}_*$ of length $k$ such that $wu \in \Omega^{(\alpha)}_*$, denote the length of the bands $B_{wu}$ and $B_u$ by $b_k(n)$ and $d_k(n)$, respectively, $n=1, \ldots, N_k$. From the proof of Lemma~\ref{l.first} one can see that the values of $s_*, s^*$ and $\delta_*, \delta^*$ generated by these collections of numbers as in \eqref{e.sstar1}, \eqref{e.deltastar} will be the same as $s_*(\lambda, \alpha), s^*(\lambda, \alpha)$, and $s_*(\lambda, \beta), s^*(\lambda, \beta)$. Now an application of Proposition~\ref{p.simple} completes the proof of Theorem~\ref{t.invariance}.
\end{proof}

\section{Proof of the Main Theorems}\label{s.proof}

In this section we prove Theorems~\ref{t.main} and \ref{t.main2}. The main input will be provided by Theorem~\ref{t.invariance}. In addition, we will need to address the measurability of the quantities in question.

The following continuity result is due to Bellissard, Iochum and Testard \cite{BIT91}. Denote by $\mathcal{K}(\R)$ the space of compact subsets of the real line, equipped with the Hausdorff metric.

\begin{theorem}[Bellissard, Iochum, Testard 1991]\label{t.bit}
For every $\lambda > 0$, the map
$$
(0,1) \setminus \Q \ni \alpha \mapsto \Sigma_{\lambda,\alpha} \in \mathcal{K}(\R)
$$
is continuous.
\end{theorem}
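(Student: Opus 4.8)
The plan is to prove continuity in the Hausdorff metric by splitting it into two one-sided estimates, after a harmless reduction. Since $\|H_{\lambda,\alpha,\omega}\| \le \lambda + 2$, every spectrum $\Sigma_{\lambda,\alpha}$ lies in the fixed compact interval $[-\lambda-2,\lambda+2]$, so continuity of $\alpha \mapsto \Sigma_{\lambda,\alpha}$ at a point $\alpha$ is equivalent, along every sequence $\alpha_j \to \alpha$, to the two statements: (a) $\sup_{E \in \Sigma_{\lambda,\alpha}} \dist(E, \Sigma_{\lambda,\alpha_j}) \to 0$ (no spectrum disappears in the limit) and (b) $\sup_{E \in \Sigma_{\lambda,\alpha_j}} \dist(E, \Sigma_{\lambda,\alpha}) \to 0$ (no spurious spectrum appears). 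It therefore suffices to fix an arbitrary sequence $\alpha_j \to \alpha$ and prove (a) and (b).

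For (a) I would use strong resolvent convergence together with the freedom in the phase $\omega$. The operators $H_{\lambda,\alpha_j,\omega}$ and $H_{\lambda,\alpha,\omega}$ differ only in their diagonal potentials $V_{\alpha_j,\omega}(n) = \lambda \chi_{[1-\alpha_j,1)}(n\alpha_j + \omega \bmod 1)$, and the set of phases $\omega$ for which some orbit point $n\alpha + \omega$ or $n\alpha_j + \omega$ lands on a discontinuity of the relevant characteristic function is countable. Fixing $\omega$ outside this countable set, one has $V_{\alpha_j,\omega}(n) \to V_{\alpha,\omega}(n)$ for each fixed $n$, and since the potentials are uniformly bounded this yields $H_{\lambda,\alpha_j,\omega} \to H_{\lambda,\alpha,\omega}$ in the strong, hence strong resolvent, sense. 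The standard consequence of strong resolvent convergence, $\sigma(H_{\lambda,\alpha,\omega}) \subseteq \liminf_j \sigma(H_{\lambda,\alpha_j,\omega})$, upgrades to the uniform bound (a) by a routine compactness argument, once one recalls that $\sigma(H_{\lambda,\alpha',\omega}) = \Sigma_{\lambda,\alpha'}$ independently of $\omega$.

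The hard part will be (b), upper semicontinuity, for which strong convergence is insufficient. Here I would use the dynamical characterization of the spectrum via uniform hyperbolicity of the Schr\"odinger cocycle: by Johnson's theorem, $E \notin \Sigma_{\lambda,\alpha}$ if and only if the transfer-matrix cocycle at energy $E$ over the uniquely ergodic hull of the Sturmian potential is uniformly hyperbolic. Uniform hyperbolicity is an open condition, equivalent to the existence of a continuous invariant cone field, and is stable under $C^0$-small perturbations of the cocycle map and under perturbations of the base that preserve the finite-pattern structure. The essential point is that as $\alpha_j \to \alpha$ the continued fraction expansions agree on longer and longer prefixes, so the Sturmian sequences defining the potentials converge to that of $\alpha$ uniformly on arbitrarily long blocks; combined with $E_j \to E$, this transports the cone field at $(\alpha,E)$ to $(\alpha_j,E_j)$ for large $j$. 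Hence $E \notin \Sigma_{\lambda,\alpha}$ and $E_j \to E$ force $E_j \notin \Sigma_{\lambda,\alpha_j}$ for large $j$, which is the contrapositive of (b).

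I anticipate that making the stability of uniform hyperbolicity under the \emph{varying base dynamics} precise is the main technical obstacle, since the rotation number itself changes with $\alpha_j$; the cleanest way to control this is to quantify how the hyperbolicity constants depend only on finitely many transfer matrices, hence on finite words of the Sturmian sequence, and then to invoke the local convergence of these words. An alternative route to (b) that avoids cocycles is to approximate $\Sigma_{\lambda,\alpha}$ by the spectra of the canonical periodic operators associated with the continued fraction approximants $p_k/q_k$ and to use Hausdorff continuity of periodic spectra through the discriminant, but I expect the uniform hyperbolicity argument to be the more robust and conceptually transparent of the two.
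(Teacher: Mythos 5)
Your proposal is correct in outline, but it takes a genuinely different route from the paper, which in fact offers no proof of Theorem~\ref{t.bit} at all: the statement is quoted as known, with the remark that it can be derived ``in a straightforward way'' from \cite[Theorem~1]{BIT91}; the latter is a quantitative continuity result whose proof proceeds via rational/periodic approximation of the frequency and transfer-matrix (discriminant) estimates --- essentially the alternative you mention in your final paragraph. What you propose instead is a soft dynamical argument: lower semicontinuity from strong resolvent convergence (your bad set of phases is indeed countable, and the compactness upgrade is routine since all the spectra lie in the fixed interval $[-2-\lambda,2+\lambda]$), and upper semicontinuity from Johnson's theorem plus openness of uniform hyperbolicity. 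The step where you rightly anticipate technical debt --- stability of uniform hyperbolicity under the \emph{varying} base --- can be closed by the standard device of embedding all hulls into the full shift $\{0,\lambda\}^{\Z}$: the cocycle map depends only on the coordinate at the origin and on $E$, an invariant cone field over the hull of frequency $\alpha$ extends continuously to a neighborhood on which strict invariance persists (also for nearby energies), and the hulls of the frequencies $\alpha_j$ eventually enter this neighborhood because the set of length-$L$ Sturmian factors is locally constant in the frequency near any irrational (the cyclic order of $0,-\beta,\dots,-L\beta \bmod 1$ can only change when two of these points collide, which forces $\beta$ rational with denominator at most $L$); then Johnson's theorem applied over the minimal system $\Omega_{\alpha_j}$ excludes $E_j$ from $\Sigma_{\lambda,\alpha_j}$. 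With that supplied, your argument is complete, and it has the virtue of generalizing to any dynamically defined family whose hulls converge, whereas the route through \cite{BIT91} yields a quantitative (H\"older-type) modulus of continuity that your soft argument does not; the quantitative refinement is irrelevant for the use made of Theorem~\ref{t.bit} in this paper, namely establishing that the maps in \eqref{e.2maps} are Borel.
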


This result is not stated in this exact way in \cite{BIT91}, but Theorem~\ref{t.bit} may be derived from \cite[Theorem~1]{BIT91} in a straightforward way.

Mapping the spectrum to its Hausdorff dimension or upper box counting dimension is a measurable operation, as shown by Mattila and Mauldin \cite[Theorem~2.1.(b) and Lemma~3.1]{MM97}:

\begin{theorem}[Mattila, Mauldin 1997]\label{t.mm}
The maps
$$
\mathcal{K}(\R) \ni K \mapsto \dim_\mathrm{H} K \in [0,1]
$$
and
$$
\mathcal{K}(\R) \ni K \mapsto \dim_\mathrm{B}^+ K \in [0,1]
$$
are Baire class $2$ functions. In particular, they are both Borel functions.
\end{theorem}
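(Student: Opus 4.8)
The plan is to prove both assertions by locating the superlevel and sublevel sets of the two dimension maps in the Borel hierarchy, exploiting that the covering functionals out of which the dimensions are built are semicontinuous on $(\mathcal{K}(\R),d_H)$ (which is Polish). Throughout I would invoke the Lebesgue--Hausdorff characterization: a real-valued function $f$ on a metric space is of Baire class $\le 2$ if and only if $f^{-1}(U) \in \mathbf{\Sigma}^0_3$ for every open $U$. Since the open subsets of $\R$ are generated by the rays $(a,\infty)$ and $(-\infty,b)$, it suffices to show that $\{f > a\}$ and $\{f < b\}$ lie in $\mathbf{\Sigma}^0_3$ for all rational $a,b$.

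For the Hausdorff dimension I would fix rational $s \ge 0$ and $\delta>0$ and work with the pre-measure $\mathcal{H}^s_\delta(K)$ defined via covers by open sets of diameter $<\delta$ (these compute the usual $\mathcal{H}^s$ in the limit $\delta\downarrow 0$). The key lemma is that $K\mapsto \mathcal{H}^s_\delta(K)$ is upper semicontinuous: taking a near-optimal open cover of $K$, compactness yields a finite subcover whose union $V$ is open and contains an $\eta$-neighborhood of $K$, so that any $K'$ with $d_H(K',K)<\eta$ satisfies $K'\subseteq V$ and is covered by the same finite family, giving $\mathcal{H}^s_\delta(K')\le \mathcal{H}^s_\delta(K)+\eps$. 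Since $\mathcal{H}^s(K)=\sup_m \mathcal{H}^s_{1/m}(K)$, I can write
\[
\{K:\mathcal{H}^s(K)>0\}=\bigcup_{m}\bigcup_{q\in\Q_{>0}}\{K:\mathcal{H}^s_{1/m}(K)>q\},
\]
where each set on the right is $F_\sigma$ because $\mathcal{H}^s_{1/m}$ is upper semicontinuous; dually, $\{K:\mathcal{H}^s(K)=0\}=\bigcap_m\bigcap_{q\in\Q_{>0}}\{K:\mathcal{H}^s_{1/m}(K)<q\}$ is $G_\delta$. Using the equivalences $\dim_\mathrm{H} K>a \iff \exists s\in\Q\cap(a,\infty):\mathcal{H}^s(K)>0$ and $\dim_\mathrm{H} K<b \iff \exists s\in\Q\cap(-\infty,b):\mathcal{H}^s(K)=0$, I conclude that $\{\dim_\mathrm{H}>a\}$ is $F_\sigma$ (hence in $\mathbf{\Sigma}^0_2\subseteq\mathbf{\Sigma}^0_3$), while $\{\dim_\mathrm{H}<b\}$ is a countable union of $G_\delta$ sets, hence in $\mathbf{\Sigma}^0_3$. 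This gives Baire class $2$.

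For the upper box counting dimension I would use the Minkowski-content formula $\dim_\mathrm{B}^+ K = 1-\liminf_{\delta\downarrow 0}\frac{\log \mathcal{L}^1(U_\delta(K))}{\log\delta}$, where $U_\delta(K)$ is the open $\delta$-neighborhood of $K$. A short argument shows $K\mapsto \mathcal{L}^1(U_\delta(K))$ is lower semicontinuous: if $C\subseteq U_\delta(K)$ is compact then $\dist(C,K)<\delta$, so $C\subseteq U_\delta(K')$ for all $K'$ near $K$, whence $\mathcal{L}^1(U_\delta(K))\le\liminf \mathcal{L}^1(U_\delta(K'))$. Restricting $\delta$ to a geometric scale $c^m$ (which leaves the $\liminf$ unchanged, since $\mathcal{L}^1(U_\delta(K))$ is monotone and varies slowly in $\delta$) and setting $\phi_m(K)=\frac{\log \mathcal{L}^1(U_{c^m}(K))}{m\log c}$, each $\phi_m$ is upper semicontinuous, being a lower semicontinuous function composed with $\log$ and multiplied by the negative constant $1/(m\log c)$. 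Then $\liminf_m\phi_m=\sup_n\inf_{m\ge n}\phi_m$, where each $\inf_{m\ge n}\phi_m$ is again upper semicontinuous (an infimum of u.s.c.\ functions), hence Baire class $1$; the increasing pointwise limit over $n$ is therefore Baire class $2$, and so is $\dim_\mathrm{B}^+=1-\liminf_m\phi_m$.

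The main obstacle, and the step deserving the most care, is the semicontinuity of the Hausdorff pre-measures $\mathcal{H}^s_\delta$ together with the bookkeeping that pins down the exact Borel level of the sublevel sets. It is precisely the set $\{\dim_\mathrm{H}<b\}$, which emerges as a countable union of $G_\delta$ sets (i.e.\ $G_{\delta\sigma}$) rather than as an $F_\sigma$ set, that prevents $\dim_\mathrm{H}$ from being Baire class $1$ and forces class $2$. The auxiliary facts that open covers compute $\mathcal{H}^s$ and that a geometric scale suffices for the box dimension are routine, but must be invoked in the correct direction so that the semicontinuity signs come out as claimed.
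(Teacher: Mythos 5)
Your proposal is correct, but note that the paper itself contains no proof of this statement: Theorem~\ref{t.mm} is quoted verbatim from Mattila and Mauldin \cite[Theorem~2.1.(b) and Lemma~3.1]{MM97}, and it enters the paper only through its consequence that the maps in \eqref{e.2maps} are Borel. So the comparison has to be with the cited reference rather than with an in-paper argument. For the Hausdorff dimension your route is essentially the standard one followed in \cite{MM97}: upper semicontinuity of the open-cover pre-measures $\mathcal{H}^s_\delta$ on $(\mathcal{K}(\R),d_H)$ (your finite-subcover argument is the right one), which places $\{\dim_\mathrm{H}>a\}$ in $\mathbf{\Sigma}^0_2$ and $\{\dim_\mathrm{H}<b\}$ in $\mathbf{\Sigma}^0_3$, followed by the Lebesgue--Hausdorff characterization of Baire class~$2$ on a Polish space. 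For the upper box counting dimension you take a genuinely different (and equally valid) route: \cite{MM97} works with covering numbers at fixed scales, whereas you use the Minkowski-content identity $\dim_\mathrm{B}^+K = 1-\liminf_{\delta\downarrow 0}\log\mathcal{L}^1\bigl(U_\delta(K)\bigr)/\log\delta$. This buys you a particularly clean semicontinuity statement, since lower semicontinuity of $K\mapsto\mathcal{L}^1(U_\delta(K))$ follows at once from inner regularity of Lebesgue measure; the price is that the argument is tied to an ambient measure on $\R$, while the covering-number route works in any totally bounded metric space. Both approaches then finish identically, writing the $\liminf$ as $\sup_n\inf_{m\ge n}$ of semicontinuous (hence Baire class~$1$) functions. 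Two small points to tighten: in the equivalence defining $\{\dim_\mathrm{H}<b\}$ you should restrict to $s\in\Q\cap[0,b)$, since negative exponents are not meaningful; and your closing assertion that the $G_{\delta\sigma}$ form of $\{\dim_\mathrm{H}<b\}$ ``forces'' class~$2$ (i.e., that $\dim_\mathrm{H}$ is not of Baire class~$1$) is never actually proved --- it is true and is established in \cite{MM97}, but it is also not needed, since the theorem asserts only membership in Baire class~$2$, and for the paper's purposes only Borel measurability is used.
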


We are now able to prove the remaining theorems.

\begin{proof}[Proof of Theorem~\ref{t.main2}]
Suppose $\lambda \ge 24$. By Theorem~\ref{t.invariance}, each of the maps
\begin{equation}\label{e.2maps}
(0,1) \setminus \Q \ni \alpha \mapsto \dim_\mathrm{H} \Sigma_{\lambda,\alpha} \in [0,1] , \quad (0,1) \setminus \Q \ni \alpha \mapsto \dim_\mathrm{B}^+ \Sigma_{\lambda,\alpha} \in [0,1]
\end{equation}
is invariant with respect to the Gauss map $G$. Moreover, by Theorems~\ref{t.bit} and \ref{t.mm}, each of the maps is Borel. Thus, for any $G$-ergodic Borel probability measure $\mu$, each of these two maps is $\mu$-almost everywhere constant.
\end{proof}

\begin{proof}[Proof of Theorem~\ref{t.main}]
The Gauss measure
$$
\mu(B) = \frac{1}{\log 2} \int_B \frac{1}{1+x} \, dx
$$
is a Borel probability measure on $(0,1) \setminus \Q$, which is ergodic with respect to $G$; see, for example, \cite{EW11}. Thus, by Theorem~\ref{t.main2}, the two maps in \eqref{e.2maps} are $\mu$-almost everywhere constant. Since $\mu$ is clearly mutually absolutely continuous relative to Lebesgue measure, Theorem~\ref{t.main} follows.
\end{proof}

\section{Some Open Problems}\label{s.remarks}

In this final section we list some questions and open problems related to Theorem~\ref{t.main}.

It would be of great interest to do away with the large coupling assumption, and we expect that it should be possible:

\begin{conj}\label{c.1}
The assumption $\lambda \ge 24$ can be dropped in Theorem~\ref{t.main}.
\end{conj}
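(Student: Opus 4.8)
The plan is to localize precisely where the hypothesis $\lambda \ge 24$ enters and to remove it there. Inspecting the argument, the constancy statement is deduced from two ingredients: the Gauss-map invariance of the two dimension functions (Theorem~\ref{t.invariance} combined with Theorem~\ref{t.lqw}), and their Borel measurability. The measurability (Theorems~\ref{t.bit} and~\ref{t.mm}) holds for every $\lambda > 0$ and is untouched by the coupling strength, so the proofs of Theorems~\ref{t.main2} and~\ref{t.main} go through verbatim once invariance is available. Thus the entire problem reduces to establishing, for all $\lambda > 0$,
$$
\dim_\mathrm{H} \Sigma_{\lambda,\alpha} = \dim_\mathrm{H} \Sigma_{\lambda,G(\alpha)}, \qquad \dim_\mathrm{B}^+ \Sigma_{\lambda,\alpha} = \dim_\mathrm{B}^+ \Sigma_{\lambda,G(\alpha)}.
$$

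In turn, the invariance rests on two results of Liu--Qu--Wen that currently require $\lambda \ge 24$: the identification of the dimensions with the pre-dimensions $s_*,s^*$ (Theorem~\ref{t.lqw}), and the bounded covariation principle (Theorem~\ref{t.boundedcovariation}). For the intermediate regime $4 < \lambda < 24$ the band combinatorics of Section~\ref{s.lqw} is still available, so the natural first step is to revisit the quantitative estimates behind these two theorems and push the constant down. The value $24$ is almost certainly not sharp: it arises from explicit lower bounds on the hyperbolicity of the underlying trace-map dynamics and the resulting control of band-length ratios, summarized in the constant $\eta = C_1 \exp(2 C_2 \lambda)$. I would try to re-derive bounded covariation and the Moran-formula identities for all $\lambda > 4$ by sharpening these estimates, at which point Theorem~\ref{t.invariance}, Proposition~\ref{p.localdim}, and Lemma~\ref{l.first} apply without change.

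The genuinely different regime is weak coupling, $0 < \lambda \le 4$, where the three-type band decomposition (which needs $\lambda > 4$) is no longer valid and the Liu--Qu--Wen framework does not directly apply. Here I would replace the elementary band combinatorics by the description of $\Sigma_{\lambda,\alpha}$ as a dynamically defined Cantor set governed by the hyperbolic set of the trace map, and seek a renormalization statement of the form: a suitable sub-band $B \subset \Sigma_{\lambda,\alpha}$ carries a copy of $\Sigma_{\lambda,G(\alpha)}$ under a map of bounded distortion. A bounded-distortion estimate of this kind is exactly the analogue of bounded covariation, and it would feed into a version of Proposition~\ref{p.simple} to yield invariance of the dimensions directly, bypassing the pre-dimensions altogether. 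An appealing feature of this route is that it proves invariance of $\dim_\mathrm{H}$ and $\dim_\mathrm{B}^+$ themselves, rather than of the auxiliary quantities $s_*,s^*$, and should therefore be more robust across coupling regimes.

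The main obstacle, in either regime, is the bounded-distortion (bounded covariation) estimate. Its proof relies on uniform hyperbolicity of the underlying dynamics, and this hyperbolicity degenerates as $\lambda \downarrow 0$: the Hausdorff dimension of $\Sigma_{\lambda,\alpha}$ tends to $1$ and the contraction rates controlling distortion become weak and non-uniform. Obtaining a distortion bound that is uniform over all frequencies $\alpha$, and hence over arbitrarily long blocks with arbitrarily large continued-fraction entries, in this limit is the crux of the difficulty, and is what I expect will require the most new input beyond the techniques recalled here.
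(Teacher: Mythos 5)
The statement you are addressing is not a theorem in the paper but an open conjecture: the authors explicitly state that, because their argument leans entirely on the Liu--Qu--Wen machinery \cite{LQW14}, which uses the largeness assumption in an essential way, ``a proof of this conjecture is currently well out of reach.'' So there is no proof in the paper to compare against, and the relevant question is whether your proposal actually closes the gap. It does not. Your first step --- localizing the role of $\lambda \ge 24$ in Theorem~\ref{t.lqw} and Theorem~\ref{t.boundedcovariation}, and observing that measurability (Theorems~\ref{t.bit} and \ref{t.mm}) holds for all $\lambda > 0$ --- is correct and matches the paper's own framing of the difficulty. But everything after that reduction is a research program, not an argument: ``I would try to re-derive bounded covariation \ldots by sharpening these estimates'' and ``I would \ldots seek a renormalization statement'' are statements of intent, with no proof of the estimates on which the whole construction rests. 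You acknowledge this yourself in the final paragraph, which concedes that the crux ``will require the most new input beyond the techniques recalled here.''

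Two concrete obstructions deserve emphasis. First, in the regime $4 < \lambda < 24$, it is not merely a matter of optimizing constants: the bounded covariation principle in \cite{LQW14} depends on quantitative band-length estimates whose proofs genuinely use the size of $\lambda$, and no one has shown they survive down to $\lambda > 4$. Second, and more seriously, in the regime $0 < \lambda \le 4$ your proposed replacement --- uniform hyperbolicity of ``the trace map'' with bounded distortion --- is not an available tool for general frequencies. The trace-map formalism yields a single polynomial dynamical system only for frequencies of constant type (e.g., the golden mean); for an arbitrary $\alpha$ with unbounded continued fraction coefficients $a_k$, the trace evolution is a non-autonomous composition of maps depending on the $a_k$, and no uniform hyperbolicity or distortion control over all such sequences is known, precisely because the required uniformity must hold over arbitrarily large entries. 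This is not a detail to be filled in later; it is the entire content of the conjecture, and your proposal leaves it open.
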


Since our proofs rely so heavily on \cite{LQW14}, which in turn uses the largeness assumption in an essential way (e.g., it is clearly impossible to treat all $\lambda > 0$ in this way as the hierarchical band structure requires $\lambda > 4$), a proof of this conjecture is currently well out of reach.

Moreover, it is natural to ask

\begin{question}
Can one determine the {\rm (}$\lambda$-dependent{\rm )} almost everywhere values of the dimensions in Theorem~\ref{t.main}?
\end{question}

This appears to be quite difficult as this would require a detailed quantitative understanding of the length of the spectral generating bands. It is likely significantly easier to identify the almost everywhere behavior in the large coupling limit. It was shown in \cite{LQW14} that for every $\alpha \in (0,1) \setminus \Q$, there are $f_*(\alpha)$ and $f^*(\alpha)$ such that
$$
\lim_{\lambda \to \infty} s_*(\lambda,\alpha) \log \lambda = - \log f_*(\alpha), \; \lim_{\lambda \to \infty} s^*(\lambda,\alpha) \log \lambda = - \log f^*(\alpha).
$$
By this result and Theorem~\ref{t.main}, there are $f_*$ and $f^*$ such that
$$
\lim_{\lambda \to \infty} s_*(\lambda,\alpha) \log \lambda = - \log f_*, \; \lim_{\lambda \to \infty} s^*(\lambda,\alpha) \log \lambda = - \log f^*
$$
for Lebesgue almost every $\alpha \in (0,1) \setminus \Q$.

\begin{question}
Can these numbers $f_*,f^*$ be determined explicitly?
\end{question}

There are explicit formulas for the numbers $f_*(\alpha)$ and $f^*(\alpha)$ (see \cite{LQW14}, see also \cite{DEGT, Q14} for explicit values in the case of metallic means, i.e., frequencies of constant type) and hence it may be quite possible to determine their almost sure behavior.

We also believe that it is reasonable to state the following

\begin{conj}
For every $\lambda > 0$, the spectra $\Sigma_{\lambda, \alpha}$ and $\Sigma_{\lambda, G(\alpha)}$ are diffeomorphic, that is, there exist neighborhoods $U \supset \Sigma_{\lambda, \alpha}$ and $V \supset \Sigma_{\lambda, G(\alpha)}$ and a $C^1$ diffeomorphism  $f : U \to V$ such that $f(\Sigma_{\lambda, \alpha}) = \Sigma_{\lambda, G(\alpha)}$.
\end{conj}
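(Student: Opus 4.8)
The plan is to read the Gauss map as a single renormalization step for the transfer-matrix cocycle and to realize the diffeomorphism through the smooth dynamics of the associated trace map rather than through a direct matching of band trees; indeed such a matching cannot exist, since at the first level $\Sigma_{\lambda,\alpha}$ branches into $a_1$ bands of type $(1,\mathrm I)$ inside $B_{\mathrm{III}}=[-2,2]$ while $\Sigma_{\lambda,\beta}$ branches into $a_2$ such bands, so the two band trees are generically not isomorphic. Recall instead that $E\in\Sigma_{\lambda,\alpha}$ exactly when the orbit of the initial data $(M_{-1}(E),M_0(E))$ under the rule $M_{k+1}=M_{k-1}M_k^{a_{k+1}}$ stays bounded, and that $\beta=G(\alpha)=[a_2,a_3,\ldots]$ corresponds to the same cocycle with the first block (the one indexed by $a_1$) removed. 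In the language of the trace map this exhibits $\Sigma_{\lambda,\alpha}$ and $\Sigma_{\lambda,\beta}$ as the sets of energies for which an analytic energy curve $\ell_\lambda^\alpha$, resp.\ $\ell_\lambda^\beta$, meets the stable lamination $W^s$ of the non-autonomous hyperbolic set associated to the continued fraction expansion. The one-step map $\Phi_{a_1}$, built from $M_0^{a_1}$, is a real-analytic map carrying $W^s$ for $\alpha$ onto $W^s$ for $\beta$, and the candidate diffeomorphism is the composition of $\Phi_{a_1}$ with the stable holonomy from the transversal $\Phi_{a_1}\circ\ell_\lambda^\alpha$ to the transversal $\ell_\lambda^\beta$.

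Concretely I would proceed as follows. First, establish uniform hyperbolicity of the trace-map cocycle on the part of phase space relevant to the spectrum; for the autonomous (eventually periodic) case this is the setting already used in \cite{G14,Me14,DEGT}, and the general non-autonomous version should follow in the same coupling range. Second, show that the stable and unstable laminations are $C^1$ (indeed $C^{1+}$), so that their holonomies are $C^1$; Hölder regularity is automatic from hyperbolicity, and the upgrade to $C^1$ should be driven by a bunching/narrow-band estimate together with the bounded-distortion information available from Theorem~\ref{t.boundedcovariation}. Third, verify that the energy curves $\ell_\lambda^\alpha,\ell_\lambda^\beta$ are transverse to $W^s$ (this is where the strict monotonicity of the trace polynomials $t_{(k,p)}$ on each band enters) and that $\Phi_{a_1}\circ\ell_\lambda^\alpha$ is again a transversal, so that the holonomy between it and $\ell_\lambda^\beta$ is a well-defined local diffeomorphism matching $\Sigma_{\lambda,\alpha}$ to $\Sigma_{\lambda,\beta}$ pointwise. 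Finally, extend this map off the spectrum to a genuine $C^1$ diffeomorphism $f:U\to V$ of ambient neighborhoods in $\R$ by using the local product structure of the hyperbolic set to smoothly transport the gaps; the resulting $f$ satisfies $f(\Sigma_{\lambda,\alpha})=\Sigma_{\lambda,\beta}$, as required.

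The hard part will be the $C^1$ regularity, on two counts. At the level of technique, bounded covariation (Theorem~\ref{t.boundedcovariation}) only gives uniformly comparable band-length ratios, which is exactly the information needed for a bi-Lipschitz, or quasisymmetric, conjugacy; promoting this to $C^1$ requires those ratios to converge along deep nestings at a controlled rate, i.e.\ genuine bounded distortion, and for one-dimensional laminations $C^1$ holonomy rigidity is delicate and is not implied by bounded geometry alone. At the level of hypotheses, the conjecture asserts the statement for every $\lambda>0$, whereas uniform hyperbolicity of the cocycle is only known for large coupling; worse, for $0<\lambda\le 4$ the hierarchical band description \eqref{e.specdesc2} on which the entire scheme rests simply ceases to hold, and even for $4<\lambda<24$ the distortion input of Theorem~\ref{t.boundedcovariation} is unavailable. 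Thus the renormalization picture makes the diffeomorphism very plausible and essentially forces its analytic candidate, but a proof covering all $\lambda>0$ would demand new, non-hierarchical control of the spectrum at small coupling together with a genuinely $C^1$ (not merely bi-Lipschitz) conjugacy theorem for the relevant Cantor sets.
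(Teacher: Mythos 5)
The statement you were asked to prove is not a theorem of the paper: it appears in Section~\ref{s.remarks} as an open conjecture, with no proof offered, and the authors explicitly note that a positive resolution would strengthen Theorems~\ref{t.main}, \ref{t.invariance} and \ref{t.main2} and would imply Conjecture~\ref{c.1}. So there is no proof in the paper to compare yours against; the only question is whether your proposal is itself a proof, and it is not --- as you yourself concede in your final paragraph. The two gaps you flag are genuine and, as things stand, fatal. First, the regularity gap: the only distortion-type input available, Theorem~\ref{t.boundedcovariation}, gives uniform two-sided bounds on ratios of band lengths, which is precisely bi-Lipschitz/quasisymmetric information; a $C^1$ holonomy (or any $C^1$ conjugacy of the Cantor sets) requires the ratios $|B_{wu}|/|B_w|$ to \emph{converge} at a controlled rate along deep nestings, i.e.\ genuine bounded distortion with decay, and neither the paper nor your sketch supplies this. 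Producing that estimate is the actual mathematical content of the conjecture; declaring it ``the hard part'' does not discharge it. Second, the coupling gap: your entire scheme rests on the hierarchical band decomposition \eqref{e.specdesc2}, which requires $\lambda > 4$, and on Theorem~\ref{t.boundedcovariation}, which requires $\lambda \ge 24$, whereas the conjecture is asserted for every $\lambda > 0$; so even a completed holonomy argument of this type could not cover the stated range.

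That said, your renormalization picture is the natural one and is essentially how the known special case was handled: for frequencies with eventually periodic continued fraction expansion the trace-map dynamics becomes autonomous, uniform hyperbolicity is available, and \cite{G14, Me14} obtain the resulting rigidity for all $\lambda > 0$ --- which is exactly why the paper cites these works when discussing the periodic-orbit case of Theorem~\ref{t.main2}. Passing from there to arbitrary $\alpha$ requires a non-autonomous, sequence-dependent hyperbolicity-and-holonomy theory (with unbounded partial quotients $a_k$, hence non-compact ``parameter space'' for the one-step maps), which currently does not exist. Identifying that as the obstruction is correct and useful, but it leaves the statement where the paper leaves it: an open conjecture, not a proved theorem.
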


Notice that the conjecture is equivalent to the statement that for any two irrational numbers $\alpha$ and $\beta$ such that $G^k(\alpha) = G^m(\beta)$ for some $k, m\in \mathbb{Z}^+$, the spectra $\Sigma_{\lambda, \alpha}$ and $\Sigma_{\lambda, \beta}$ are diffeomorphic for every $\lambda$. In particular, that would imply that any property of the spectrum that is invariant under diffeomorphisms must be a ``tail property'' of the continued fraction expansion of $\alpha$.

Clearly, a positive answer to this conjecture will provide a strengthening of Theorems~\ref{t.main}, \ref{t.invariance}, \ref{t.main2} and also prove Conjecture~\ref{c.1}.

\medskip

Next, we can also look beyond properties of $\Sigma_{\lambda,\alpha}$ and consider other important quantities associated with the operator family $\{ H_{\lambda,\alpha,\omega} \}_{\omega \in [0,1)}$.

The density of states measure $dN_{\lambda,\alpha}$ is a probability measure supported by $\Sigma_{\lambda,\alpha}$, which is defined by
\begin{equation}\label{e.idsdef}
\int g(E) \, dN_{\lambda,\alpha}(E) = \int_\omega \langle \delta_0 , g(H_{\lambda,\alpha,\omega}) \delta_0 \rangle \, d\omega.
\end{equation}
In fact, $dN_{\lambda,\alpha}$ is the equilibrium measure associated with $\Sigma_{\lambda,\alpha}$ in the sense of logarithmic potential theory (this follows from the vanishing of the Lyapunov exponent on the spectrum \cite{BIST89} and a general result of Simon \cite{S07}).

\begin{question}
Given $\lambda > 0$, is the map
\begin{equation}\label{e.dimdsm}
(0,1) \setminus \Q \ni \alpha \mapsto \dim_\mathrm{H} dN_{\lambda,\alpha} \in [0,1]
\end{equation}
measurable and invariant under the Gauss map? In particular, is it true that for every $\lambda > 0$, $\dim_\mathrm{H} dN_{\lambda,\alpha}$ takes the same value for Lebesgue almost every $\alpha$?
\end{question}

Here, $\dim_\mathrm{H} dN_{\lambda,\alpha}$ denotes the upper Hausdorff dimension of $dN_{\lambda,\alpha}$, that is, the infimum over all Hausdorff dimensions of sets supporting $dN_{\lambda,\alpha}$.

Transport exponents capture the rate of spreading in the time-dependent Schr\"odinger equation. For $p > 0$, consider the $p$-th moment of the position operator,
$$
\langle |X|_{\delta_0}^p \rangle (t) = \sum_{n \in \Z} |n|^p | \langle e^{-itH_{\lambda,\alpha,\omega}} \delta_0 , \delta_n \rangle |^2
$$
We average in time as follows. If $f(t)$ is a function of $t > 0$ and $T > 0$ is given, we denote the time-averaged function at $T$ by $\langle f \rangle (T)$:
$$
\langle f \rangle (T) = \frac{2}{T} \int_0^{\infty} e^{-2t/T} f(t) \, dt.
$$
Then, the corresponding upper and lower transport exponents $\tilde \beta^+_{\delta_0}(p)$ and $\tilde \beta^-_{\delta_0}(p)$ are given, respectively, by
$$
\tilde \beta^+_{\delta_0}(p) = \limsup_{T \to \infty} \frac{\log \langle \langle |X|_{\delta_0}^p \rangle \rangle (T) }{p \, \log T},
$$
$$
\tilde \beta^-_{\delta_0}(p) = \liminf_{T \to \infty} \frac{\log \langle \langle |X|_{\delta_0}^p \rangle \rangle (T) }{p \, \log T}.
$$
The transport exponents $\tilde \beta^\pm_{\delta_0}(p)$ belong to $[0,1]$ and are non-decreasing in $p$ (see, e.g., \cite{DT10}), and hence the following limits exist:
\begin{align*}
\tilde \alpha_l^\pm & = \lim_{p \to 0} \tilde \beta^\pm_{\delta_0}(p), \\
\tilde \alpha_u^\pm & = \lim_{p \to \infty} \tilde \beta^\pm_{\delta_0}(p).
\end{align*}
All these transport exponents depend on $\lambda,\alpha,\omega$. It is quite possible that the dependence on $\omega$ is trivial for some of them. For example, in the case of $\alpha = \frac{\sqrt{5}-1}{2}$ and $\tilde \alpha_u^\pm$, this was in fact shown in \cite{DGY14}. On the other hand, the dependence on $\lambda$ is known to be non-trivial \cite{DKL, DT07}. Regarding the dependence on $\alpha$, keeping with the theme of this paper, the following question arises:

\begin{question}
Given $\lambda > 0$, which of these transport exponents depend on $\alpha$ in a measurable and $G$-invariant way? Specifically, which of these transport exponents are, for given $\lambda > 0$, Lebesgue almost everywhere constant in $\alpha$? {\rm (}Here one would either have to show independence of $\omega$ or possibly average in $\omega$.{\rm )}
\end{question}

\begin{remark}
One can also consider the optimal global H\"older exponent $\gamma_{\lambda,\alpha}$ of the density of states measure $dN_{\lambda,\alpha}$. It follows from Munger's recent work \cite{M14} that for $\lambda \ge 24$, the optimal H\"older exponent is the same for Lebesgue almost every $\alpha$, and in fact this almost sure value is zero. He also relies on the line of papers \cite{FLW11, LPW07, LQW14, LW04, R97}.
\end{remark}

Going even beyond the operator family $\{ H_{\lambda,\alpha,\omega} \}_{\omega \in [0,1)}$, one can ask similar questions about other operator families. For example, the heavily studied almost Mathieu operator is given by
$$
[H^\mathrm{AMO}_{\lambda,\alpha,\omega}\psi](n) = \psi(n+1) + \psi(n-1) + 2 \lambda \cos (2 \pi(n \alpha + \omega)) \psi(n),
$$
where $\lambda > 0$, $\alpha \in (0,1) \setminus \Q$, and $\omega \in [0,1)$. It follows again easily that the spectrum of $H^\mathrm{AMO}_{\lambda,\alpha,\omega}$ is independent of $\omega$ and may therefore be denoted by $\Sigma^\mathrm{AMO}_{\lambda,\alpha}$. It is known that
$$
\mathrm{Leb} (\Sigma^\mathrm{AMO}_{\lambda,\alpha}) = 4 | 1- \lambda |.
$$
In particular, it is of interest to determine the Hausdorff dimension of $\Sigma^\mathrm{AMO}_{\lambda,\alpha}$ for $\lambda = 1$. There are some known upper bounds for $\dim_\mathrm{H} \Sigma^\mathrm{AMO}_{1,\alpha}$ that hold for set of frequencies $\alpha$ of Lebesgue measure zero \cite{L94, S10}. However, for Lebesgue almost every $\alpha$ there is as yet no information on $\dim_\mathrm{H} \Sigma^\mathrm{AMO}_{1,\alpha}$. The same strategy as the one pursued in this paper, namely showing the invariance under the Gauss map, may be applicable here and yield the almost everywhere constancy of $\dim_\mathrm{H} \Sigma^\mathrm{AMO}_{1,\alpha}$.

\begin{question}\label{q.amo}
Is $\dim_\mathrm{H} \Sigma^\mathrm{AMO}_{1,\alpha}$ invariant under $G$? In particular, is it true that $\dim_\mathrm{H} \Sigma^\mathrm{AMO}_{1,\alpha}$ takes the same value for Lebesgue almost every $\alpha$?
\end{question}

Furthermore, one can ask the same questions about the density of states measure and transport exponents as above for the almost Mathieu operator. 

More generally, we have the following open-ended question, which was already implicitly addressed in the introduction. A one-frequency quasi-periodic Schr\"odinger operator is an operator in $\ell^2(\Z)$ of the form
$$
[H_{f,\alpha,\omega}\psi](n) = \psi(n+1) + \psi(n-1) + f(n \alpha + \omega) \psi(n),
$$
where $f : \R / \Z \to \R$ is bounded and measurable, $\alpha \in (0,1) \setminus \Q$, and $\omega \in [0,1)$.

\begin{question}\label{q.general}
For which one-frequency quasi-periodic Schr\"odinger operators are there spectral quantities of interest that depend measurably on the frequency and are invariant under the Gauss map?
\end{question}

\section*{Acknowledgment}

We would like to express our gratitude to Jean Bellissard from whom we learned about Conjecture~\ref{c.main} and the intuition behind it.

\end{document}